\documentclass[a4paper,12pt]{article}
\usepackage{amscd,amsfonts,amsmath,amsthm,amssymb,bm,color,latexsym}

\newtheorem{conj}{Conjecture}[section]

\newtheorem{prop}{Proposition}[section]

\newcommand{\F}{\mathcal{F}}
\newcommand{\J}{\mathcal{J}}
\newcommand{\I}{\mathcal{I}}
\newcommand{\A}{\mathcal{A}}
\newcommand{\Q}{\mathbb{Q}}
\newcommand{\N}{\mathbb{N}}

\begin{document}

\title{Remarks on Frankl's conjecture}
\author{Francesco Marigo\thanks{University of Insubria}\ \ and Davide Schipani\thanks{University of Zurich}}
\date{}
\maketitle
\abstract{First a few reformulations of Frankl's conjecture are given, in terms of reduced families or matrices, or analogously in terms of lattices. These lead naturally to a stronger conjecture with a neat formulation which might be easier to attack than Frankl's. To this end we prove an inequality which might help in proving the stronger conjecture.}

\section{Introduction}

Frankl's conjecture, also called union-closed sets conjecture, is a well known open problem in combinatorics since 1979. The simple and elementary formulation can certainly account in part for the appeal of the problem. Briefly, a family of sets is said to be union-closed if the union of any two sets of the family is still part of the family; the conjecture states that for any finite union-closed family of finite sets, other than the family consisting only of the empty set, there exists an element that belongs to at least half of the sets in the family. A dual intersection-closed sets formulation exists as well. A survey of the state of the art can be found in \cite{bs}.

The aim of this paper is to present new reformulations, which lead naturally to a stronger conjecture. The strucure of the paper is as follows. Section \ref{notation} fixes the notation and definitions which will be used throughout the paper. Section \ref{equival} presents a few reformulations of the conjecture in terms of reduced families, matrices and lattices. In particular the matrix setting will be mostly exploited, although, as shown in Section \ref{notation}, there exists a correspondence among all settings. Section \ref{stronger} presents a stronger conjecture, which naturally derives from a reformulation of the previous section. It has a neat formulation in terms of cardinality of intersections or in terms of matrices. Section \ref{ineq} proves a combinatorial inequality strictly related to the stronger conjecture.

\section{Notation and definitions}\label{notation}

In this section we set the terminology used in the paper.
First of all, we remark that all sets in this paper are finite.
Terms not defined here and statements without proof are either covered in the literature, for example \cite{bf,bs,gg}, or are a direct consequence of definitions, possibly after a bit of elaboration. 

Let $\F$ be a family of subsets of a nonempty set $S$ such that:
\begin{itemize}
\item $\varnothing$ and $S$ belong to $\F$;
\item for every $x \in S$, there are $A,B \in \F$ such that $A \setminus B = \{ x \}$.
\end{itemize}
We call $\F$ a \emph{reduced family} on $S$. If it holds
\begin{itemize}
\item for all $A,B \in \F$, also $A \cap B$ belongs to $\F$,
\end{itemize}
we say that $\F$ is a reduced \emph{intersection-closed} family. If it holds
\begin{itemize}
\item for all $A,B \in \F$, also $A \cup B$ belongs to $\F$,
\end{itemize}
we say that $\F$ is a reduced \emph{union-closed} family.
The family of complements of sets in a reduced intersection-closed family is a reduced union-closed family and vice-versa.

There is a direct correspondence between reduced families and lattices.
Indeed, the members of a reduced intersection-closed (union-closed) family $\F$ form a lattice with respect to set inclusion.
The other direction can be shown as follows.
Let $L$ be a lattice (we always assume that a lattice have at least two elements). A \emph{filter} of $L$ is an upward closed and meet-closed subset of $L$, an \emph{ideal} of $L$ is a downward closed and join-closed subset of $L$. We say that a filter (ideal) $A$ is \emph{irreducible} if it is not the intersection of any set of filters (ideals) which does not have $A$ as member. We write $\J$ for the family of irreducible filters of $L$ and $\I$ for the set of irreducible ideals of $L$.
For $a \in L$, we write $\J_a$ for the subset of $\J$ of all the irreducible filters which contain $a$, and $\I_a$ for the subset of $\I$ of all the irreducible ideals which do not contain $a$. Then

\begin{itemize}
\item the family of all $\J_a$'s, for $a \in L$, is a reduced intersection-closed family on $\J$;
\item the family of all $\I_a$'s, for $a \in L$, is a reduced union-closed family on $\I$.
\end{itemize}
And the lattice of the members of either of the two families, ordered by set inclusion, is isomorphic to $L$.

Now, let $\F = \{ A_1, \ldots , A_m \}$ be a nonempty family of subsets of a nonempty set $S = \{ x_1, \ldots , x_n \}$.
We call \emph{characteristic matrix} of $\F$ the matrix $ F \in \Q^{m \times n}$ with elements
$$F(h,k) = \left\{
\begin{array}{ll}
1 &\hbox{ if } x_k \in A_h\\
0 &\hbox{ if } x_k \notin A_h.
\end{array}
\right.
$$
A matrix $F \in \Q^{m \times n}$ is the characteristic matrix of some reduced family iff:
\begin{itemize}
\item there is $h \in \{1,\ldots,m\}$ such that $F(h,k)=0$ for all $k \in \{1,\ldots,n\}$;
\item there is $h \in \{1,\ldots,m\}$ such that $F(h,k)=1$ for all $k \in \{1,\ldots,n\}$;
\item the rows are all different;
\item for all $k \in \{1,\ldots,n\}$ there are two rows which differ only at the $k$-th column (hence the columns are all different).
\end{itemize}
The characteristic matrix of a reduced family is the characteristic matrix of some reduced intersection-closed family iff:
\begin{itemize}
\item for all $h,i \in \{1,\ldots,m\}$, there is $j \in \{1,\ldots,m\}$ such that $F(j,k)=F(h,k)F(i,k)$
for all $k \in \{1,\ldots,n\}$;
\end{itemize}
it is the characteristic matrix of some reduced union-closed family iff:
\begin{itemize}
\item for all $h,i \in \{1,\ldots,m\}$, there is $j \in \{1,\ldots,m\}$ such that $F(j,k)=F(h,k)+F(i,k)-F(h,k)F(i,k)$
for all $k \in \{1,\ldots,n\}$.
\end{itemize}
We call \emph{reduced intersection-closed (union-closed)} matrix a characteristic matrix of some reduced intersection (union) closed family. We write $\Q_\cap^{m \times n}$ ($\Q_\cup^{m \times n}$) for the set of $m \times n$ reduced intersection-closed (union-closed) matrices.

We write $\F_x$ for the subfamily of $\F$ of all the sets which contain $x$.
As usual, $|A|$ is the cardinality of the set $A$.
We write $e_k$ for the $k$-th unit coordinate vector, that is the vector with all components $0$ except the $k$-th component, which is equal to $1$.
If there is no risk of confusion, we write shortly $\Sigma X, \Sigma x$ for the sum of all the elements of a matrix $X$ or a vector $x$, respectively.
Furthermore, the vectors written in matrix form are columns, that is $a \times 1$ matrices. 

\section{Equivalent formulations}\label{equival}

We formulate Frankl's conjecture for lattices, reduced families and reduced matrices. It is known \cite{bs} that working with reduced families is not restrictive.
It is straightforward to check that the following statements are all equivalent.
Each equivalent formulation of the conjecture is followed by a reinforced statement, which corresponds to a stronger conjecture by B. Poonen \cite{pb,bs}.

\begin{conj} \label{conj:latfil}
Let $L$ be a lattice, $\J$ the set of its irreducible filters.
Then,
$$2 \min\{|A| : A \in \J\} \le|L|.$$
\end{conj}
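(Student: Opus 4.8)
The plan is to translate the statement out of the lattice language, where it becomes a clean claim about column sums of a matrix, and then to attack that claim by a weighted counting argument backed up by induction. Throughout I will use the correspondence of Section \ref{notation}, by which $\F = \{\J_a : a \in L\}$ is a reduced intersection-closed family on the ground set $\J$ and $a \mapsto \J_a$ is a lattice isomorphism onto its image.

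First I would record the two quantities appearing in the inequality in matrix terms. Let $F$ be the characteristic matrix of $\F$, with rows indexed by the elements $a \in L$ and columns by the irreducible filters $A \in \J$; then $F(a,A)=1$ iff $a \in A$. Since $a \mapsto \J_a$ is injective, the number of rows of $F$ is $|\F| = |L|$, while for a fixed $A \in \J$ the column indexed by $A$ has sum $|\{a \in L : a \in A\}| = |A|$. Hence $2\min\{|A| : A \in \J\} \le |L|$ says exactly that some column of the reduced intersection-closed matrix $F$ has sum at most half the number of rows. This is the intersection-closed (abundant-element) form of Frankl's conjecture, and it is the formulation I would work with from here on.

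The naive move is to bound the minimum column sum by the average, i.e. to hope that $\Sigma F / |\J| \le |L|/2$. This fails: there are reduced intersection-closed matrices whose columns have average sum strictly above $|L|/2$, so an unweighted count is too weak. I would therefore attach to each column $A$ a nonnegative weight $w_A$ measuring how deep the filter $A$ sits in $L$, and try to prove $\sum_{A \in \J} w_A\,(|L| - 2|A|) \ge 0$ for a suitable choice of weights; any such sign-definite inequality forces some column to satisfy $|A| \le |L|/2$. In parallel I would run an induction on $|L|$: delete or contract a carefully chosen element so that the resulting reduced matrix is again intersection-closed, apply the inductive hypothesis there, and transport the small-sum column back to $L$.

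The main obstacle is exactly the gap that averaging leaves open. Neither the unweighted count nor any evident weighting is known to be sign-definite over all reduced intersection-closed matrices, and on the inductive side the trouble is dual: contracting the lattice can erase the margin $|L| - 2|A|$ for the very column one wants to preserve, so the induction will not close without extra control on how column sums move under contraction. I expect that closing this gap requires a genuinely new structural input, which is why the natural next step is to isolate a stronger but cleaner statement and a supporting inequality rather than to attack Conjecture \ref{conj:latfil} directly.
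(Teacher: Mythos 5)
There is a genuine gap here, and it is total: the statement you were asked to prove is not a theorem of the paper at all. Conjecture \ref{conj:latfil} is Frankl's union-closed sets conjecture itself, rendered in lattice language, and the paper presents it (together with Conjectures \ref{conj:latide}--\ref{conj:matuni}) precisely as an open conjecture with no proof; no proof is known. Your own text concedes this at the end --- ``I expect that closing this gap requires a genuinely new structural input'' --- so what you have written is a research plan, not a proof. Concretely, the two steps on which everything depends are both left open: you never exhibit weights $w_A$ for which $\sum_{A \in \J} w_A\,(|L| - 2|A|) \ge 0$ holds over all reduced intersection-closed matrices (and you correctly observe that the unweighted version is false), and the proposed induction is never closed because, as you note, contraction can destroy the margin $|L| - 2|A|$ of the column you need. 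A proposal that names its missing lemma is still a proposal with a missing lemma.

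That said, the parts you do carry out are sound and match the paper. Your translation is exactly the paper's correspondence: with $\F = \{\J_a : a \in L\}$ a reduced intersection-closed family on $\J$ and $F$ its characteristic matrix, the column indexed by $A \in \J$ has sum $|A|$ and the number of rows is $|L|$, so Conjecture \ref{conj:latfil} is equivalent to Conjecture \ref{conj:matint} (equivalently \ref{conj:setint}); this is the chain of equivalences of Section \ref{equival}. Moreover, your closing suggestion --- attach weights to columns and look for a sign-definite inequality, or else isolate a stronger but cleaner statement plus a supporting inequality --- is essentially the program the paper itself pursues: the weight functions $w_r$ and Conjectures \ref{conj:matfun} and \ref{conj:powers}, the stronger Conjecture \ref{conj:inttwo}, and the supporting inequality of Proposition \ref{prop:symdif}. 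So your instincts point in the same direction as the authors', but neither your proposal nor the paper proves the statement; the honest verdict is that no proof has been compared because none exists on either side.
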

\emph{Reinforcement}: The inequality is strict unless $L$ is a Boolean lattice.
\begin{conj} \label{conj:latide}
Let $L$ be a lattice, $\I$ the set of its irreducible ideals.
Then,
$$2 \max\{|A^C| : A \in \I\} \ge|L|.$$
\end{conj}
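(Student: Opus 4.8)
The plan is to derive Conjecture \ref{conj:latide} directly from Conjecture \ref{conj:latfil} by combining a complementation identity with the order-duality of lattices, thereby exhibiting the two as reformulations of one another. First I would clear the complement. Each irreducible ideal $A \in \I$ is a subset of $L$, so $|A^C| = |L| - |A|$, and therefore
$$2\max\{|A^C| : A \in \I\} \ge |L| \iff 2\min\{|A| : A \in \I\} \le |L|.$$
This puts the assertion in the same form as Conjecture \ref{conj:latfil}, but with irreducible ideals in place of irreducible filters.

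The second step is to pass to the opposite lattice $L^{\mathrm{op}}$, obtained by reversing the order of $L$. Reversal exchanges ``upward closed'' with ``downward closed'' and ``meet-closed'' with ``join-closed,'' so the filters of $L^{\mathrm{op}}$ are exactly the ideals of $L$, viewed as the very same collection of subsets of the common ground set. Intersection is ordinary set intersection in both pictures, and irreducibility is defined solely through non-representability as an intersection of other members of the family; hence it transfers verbatim, and the irreducible filters of $L^{\mathrm{op}}$ coincide with the irreducible ideals of $L$. In symbols, $\J(L^{\mathrm{op}}) = \I$ as families of subsets, with cardinalities preserved. Since $|L^{\mathrm{op}}| = |L|$, applying Conjecture \ref{conj:latfil} to $L^{\mathrm{op}}$ gives $2\min\{|A| : A \in \I\} \le |L|$, which by the first step is precisely Conjecture \ref{conj:latide}.

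Running the same two steps in reverse---starting from Conjecture \ref{conj:latide} for $L^{\mathrm{op}}$ and translating back to $L$---recovers Conjecture \ref{conj:latfil}, so the two are equivalent over the whole class of lattices. There is no genuine analytic obstacle in this reduction: everything comes down to the bookkeeping of complements, the trivial identity $|L^{\mathrm{op}}| = |L|$, and the observation that filters and ideals swap under order reversal while irreducibility is preserved, being an intrinsically set-theoretic condition on the family. The real difficulty is of course hidden elsewhere, and this is the point I would flag as the crux: the plan only reduces Conjecture \ref{conj:latide} to the equally open Conjecture \ref{conj:latfil} (itself a restatement of Frankl's conjecture), so it establishes \emph{equivalence} rather than an unconditional proof.
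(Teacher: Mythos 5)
Your proposal is correct, and you rightly recognize the essential point: Conjecture \ref{conj:latide} is itself an open reformulation of Frankl's conjecture, so the only provable content is its equivalence to the companion formulations, which is exactly what you establish. The paper offers no explicit argument here---it merely asserts that all six formulations are ``straightforward to check'' to be equivalent---so there is no written proof to match; your check via order duality (the filters of $L^{\mathrm{op}}$ are precisely the ideals of $L$ as a family of subsets, irreducibility transfers verbatim because it is a purely set-theoretic condition on that family, and $|A^C| = |L| - |A|$ converts the max into a min) is a valid and clean instantiation of the claimed equivalence. The route implicit in the paper's Section \ref{notation} machinery is slightly different: it passes through the set-family formulations, using the correspondences $a \mapsto \J_a$ and $a \mapsto \I_a$ (which give $|\F_x| = |x|$ for $x \in \J$ and $|\F_x| = |x^C|$ for $x \in \I$) together with the fact that complementing every member of a reduced intersection-closed family yields a reduced union-closed family. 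Both routes rest on the same underlying duality; yours is the more direct one for this particular pair of statements, while the paper's correspondences are what tie the lattice formulations to the set and matrix formulations that the rest of the paper actually works with.
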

\emph{Reinforcement}: The inequality is strict unless $L$ is a Boolean lattice.
\begin{conj} \label{conj:setint}
Let $\F$ be a reduced intersection-closed family on $S$.
Then,
$$2 \min\{|\F_x| : x \in S\} \le |\F|.$$
\end{conj}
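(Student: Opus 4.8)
The plan is to argue directly in the matrix picture of the previous section. Writing $F \in \Q_\cap^{m \times n}$ for the characteristic matrix of $\F$, the quantity $|\F_{x_k}|$ is exactly the sum of the $k$-th column of $F$, while $|\F| = m$; so the claim is that some column of $F$ has sum at most $m/2$, equivalently that some element $x \in S$ lies outside at least half of the members of $\F$. The first thing I would try is the naive pairing: fix $x$ and consider the map $\F_x \to \F \setminus \F_x$ given by $A \mapsto A \setminus \{x\}$. If this landed in $\F$ and were injective for some $x$, we would get $|\F_x| \le |\F \setminus \F_x| = m - |\F_x|$ and be done. Injectivity is automatic; the trouble is that $A \setminus \{x\}$ need not belong to $\F$. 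I would record exactly for which $x$ the family is closed under deleting $x$ (these are the easy cases) and isolate the genuinely resistant configurations.

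The main idea is then to relax from a single element to a weighting. Let $p \in \Q^n$ be a probability vector on $S$ (so $p \ge 0$ and $\Sigma p = 1$) and let $\mu$ be the induced measure, $\mu(A_h) = \sum_k F(h,k) p_k$, which is the $h$-th entry of $F p$. Since $\min_k(\Sigma\,\text{col}_k) \le p^{T} F^{T} \mathbf 1 = \mathbf 1^{T} F p = \sum_h \mu(A_h)$ for every such $p$, it suffices to produce a single distribution $p$ under which the average measure of a member of $\F$ is at most $1/2$, that is $\sum_{h=1}^m \mu(A_h) \le m/2$. This is where intersection-closure should be brought to bear: for any $h,i$ the set $A_h \cap A_i$ is again a member, and the modular identity $\mu(A_h) + \mu(A_i) = \mu(A_h \cup A_i) + \mu(A_h \cap A_i)$ ties the measures of members to one another. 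I would also exploit the reduced hypothesis, which guarantees for each $x$ a smallest member $M_x = \bigcap_{A \in \F_x} A$ containing $x$, so that the lattice of members has enough anchoring ``atoms'' to pin down a choice of $p$.

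To build $p$ I would proceed from the bottom of the lattice: assign initial weight to the elements generating the minimal sets $M_x$ and propagate weight upward so that frequently occurring elements are penalised, aiming to certify $\sum_h \mu(A_h) \le m/2$ by a telescoping or induction-on-$|\F|$ argument that removes a member together with its intersections. The hard part, and I expect it to be the only real obstacle, is precisely the global consistency of this weighting: a distribution that forces the inequality along one chain of the lattice can violate it along another, and making a local rule succeed simultaneously everywhere is equivalent to the full conjecture. This is exactly the gap that the stronger conjecture of Section \ref{stronger}, phrased through the intersection matrix $F F^{T}$ with entries $|A_h \cap A_i|$, is designed to close, and the combinatorial inequality of Section \ref{ineq} is the quantitative input I would invoke to control the propagation of weights.
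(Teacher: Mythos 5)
You have been handed a statement that is not a theorem of the paper at all: Conjecture \ref{conj:setint} is Frankl's conjecture itself, in its intersection-closed formulation, and the paper offers no proof of it --- Section \ref{equival} only establishes equivalences among reformulations, and Sections \ref{stronger}--\ref{ineq} introduce a \emph{stronger} conjecture plus one supporting identity. Your proposal, accordingly, is not a proof, and to your credit you concede this in your final paragraph. The two concrete gaps are these. First, the naive pairing $A \mapsto A \setminus \{x\}$ fails exactly where you note it does: $A \setminus \{x\}$ need not belong to $\F$, and the ``easy cases'' where $\F$ is closed under deleting some $x$ do not exhaust reduced intersection-closed families, so no reduction is achieved. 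Second, and more importantly, your ``relaxation'' to a probability vector $p \in \Delta_n$ with $\sum_{h} \mu(A_h) \le m/2$ is not a relaxation at all: since $\mathbf{1}^{T} F p$ is a convex combination of the column sums of $F$, the existence of such a $p$ is \emph{equivalent} to the existence of a single good column --- this is precisely the content of the paper's Proposition \ref{prop:conjeq1}, which shows that Conjecture \ref{conj:matcom} (your weighted statement, in its union-closed dual) is equivalent to, not weaker than, Frankl's conjecture. So the weighting step silently transfers the entire difficulty into what you call the ``global consistency of the weighting,'' which is the conjecture restated, and no construction of $p$ is given or even sketched beyond the heuristic of ``propagating weight upward.''

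Your closing appeal to Sections \ref{stronger} and \ref{ineq} cannot close this gap either. Conjecture \ref{conj:intdif} is strictly stronger than the statement you are trying to prove and is itself open; invoking it would be circular in the wrong direction. The one thing the paper actually proves in Section \ref{ineq} is the symmetric-difference identity $\sum_{(A,B) \in \F^2}|A \otimes B| = \sum_{(A,B) \in \F^2}|A \oplus B| + \sum_{x \in S}\delta_x^2$, which yields only the conditional statement of Proposition \ref{prop:symdifcor}: \emph{if} $\sum_{(A,B) \in \F^2}|A \cap B| + \sum_{x \in S}\delta_x^2 \ge \sum_{(A,B) \in \F^2}|A^C \cap B^C|$ holds, then the strong conjecture follows. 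That hypothesis is verified nowhere, by you or by the paper. In short: there is no proof to compare yours against, any correct blind attempt would constitute a solution to a major open problem, and what your attempt actually does is rediscover the paper's own strategy (weight functions $w: \Q_\cup^{m \times n} \to \Delta_n$, Conjectures \ref{conj:matcom} and \ref{conj:matfun}) and then stop exactly where the paper stops.
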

\emph{Reinforcement}: The inequality is strict unless $\F$ is the powerset of $S$.
\begin{conj} \label{conj:setuni}
Let $\F$ be a reduced union-closed family on $S$.
Then,
$$2 \max\{|\F_x| : x \in S\} \ge |\F|.$$
\end{conj}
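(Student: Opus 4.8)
The plan is to deduce this statement from the preceding formulations rather than to attack it directly: since all four statements are versions of Frankl's conjecture, the realistic goal here is to verify the chain of equivalences asserted above, and the shortest path runs from Conjecture~\ref{conj:setint} to Conjecture~\ref{conj:setuni} by complementation, with the lattice versions joined in afterwards.

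First I would pass from a reduced union-closed family $\F$ on $S$ to its family of complements $\mathcal{G} = \{ S \setminus A : A \in \F \}$. By the remark in Section~\ref{notation} this $\mathcal{G}$ is a reduced intersection-closed family on the same ground set $S$, and since complementation is a bijection on the subsets of $S$ we get $|\mathcal{G}| = |\F|$. The one computation to carry out with care is the comparison of the subfamilies: for each $x \in S$,
$$\mathcal{G}_x = \{ S \setminus A : A \in \F,\ x \notin A \},$$
so that $|\mathcal{G}_x| = |\F| - |\F_x|$. Minimising over $x$ converts the minimum into a maximum,
$$\min_{x \in S} |\mathcal{G}_x| = |\F| - \max_{x \in S} |\F_x|,$$
and feeding this into the inequality $2\min_x |\mathcal{G}_x| \le |\mathcal{G}|$ of Conjecture~\ref{conj:setint} rearranges to precisely $2\max_x |\F_x| \ge |\F|$. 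Running the same argument with $\F$ and $\mathcal{G}$ exchanged gives the converse, so Conjectures~\ref{conj:setint} and~\ref{conj:setuni} are equivalent.

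To close the loop with the lattice formulations I would invoke the dictionary recorded in Section~\ref{notation}. Given a lattice $L$, the family $\F$ of all $\I_a$'s is reduced union-closed on $\I$ and its member lattice is isomorphic to $L$, whence $|\F| = |L|$; and for an irreducible ideal $x \in \I$ the subfamily $\F_x$ consists of those $\I_a$ with $x \in \I_a$, that is with $a \notin x$, so that $|\F_x| = |L| - |x| = |x^C|$. Hence $\max_x |\F_x| = \max\{ |A^C| : A \in \I \}$ and Conjecture~\ref{conj:setuni} reads off verbatim as Conjecture~\ref{conj:latide}; the parallel passage through $\J$ and irreducible filters matches Conjectures~\ref{conj:setint} and~\ref{conj:latfil}.

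I expect no genuine obstacle in these verifications, which are pure bookkeeping; the real difficulty is hidden, in that the target Conjecture~\ref{conj:setint} to which everything is reduced is itself open, so this establishes a reformulation and not a resolution. The only points that demand attention are checking that each correspondence preserves both the global count ($|\mathcal{G}| = |\F|$, $|\F| = |L|$) and the local counts $|\F_x|$, and in particular that complementation and the ideal/filter passages correctly interchange $\min$ with $\max$; an error of direction there would quietly yield a false reformulation.
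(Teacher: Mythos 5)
Your proposal is correct and matches the paper's treatment of this statement: Conjecture~\ref{conj:setuni} is itself one of the equivalent forms of Frankl's conjecture, so the paper offers no proof of it either, only the assertion that the listed formulations are ``straightforward to check'' to be equivalent, and your complementation bookkeeping (passing to $\mathcal{G}=\{S\setminus A : A \in \F\}$, computing $|\mathcal{G}_x| = |\F|-|\F_x|$, and rearranging $2\min_x|\mathcal{G}_x|\le|\mathcal{G}|$ into $2\max_x|\F_x|\ge|\F|$, together with the $\I_a$ dictionary giving $|\F_x|=|x^C|$) is precisely that check, carried out correctly. You are also right to flag explicitly that this yields a reformulation and not a resolution, since the conjecture to which everything reduces remains open.
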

\emph{Reinforcement}: The inequality is strict unless $\F$ is the powerset of $S$.
\begin{conj} \label{conj:matint}
Let $F \in \Q^{m \times n}$ be a reduced intersection-closed matrix.
Then,
$$2 \min\{\Sigma [F e_k] : k=1,\ldots,n\} \le m.$$
\end{conj}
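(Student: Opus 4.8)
The plan is to recast the statement as the existence of an injection and to reduce it to a matching condition on the lattice attached to $F$. Write $s_k = \Sigma[Fe_k]$ for the $k$-th column sum, so that $s_k = |\F_{x_k}|$ counts the rows of $F$ having a $1$ in column $k$. The inequality $2\min_k s_k \le m$ is equivalent to: for the column $k_0$ achieving the minimum, the rows with a $1$ in column $k_0$ are at most as many as those with a $0$ there. So it suffices to exhibit, for some column $k$, an injection $\phi$ from $\F_{x_k}$ into $\F\setminus\F_{x_k}$. Since $F$ is a reduced intersection-closed matrix, its rows are closed under entrywise product and include the all-zero and all-one rows; hence they form a lattice $L$ under inclusion, with meet given by intersection and top element $S$. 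I would work throughout with this lattice structure.

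The natural candidate for $\phi$ is $A \mapsto A\setminus\{x_k\} = A\cap(S\setminus\{x_k\})$, which is injective on its domain of definition because removing a fixed element cannot identify two distinct sets. The difficulty is that $A\setminus\{x_k\}$ need not be a member of $\F$, since $S\setminus\{x_k\}$ is generally not a row of $F$. I would therefore split $\F_{x_k}$ into the \emph{removable} sets, for which $A\setminus\{x_k\}\in\F$, and where $\phi$ is already defined, and the \emph{non-removable} sets, and reroute the latter. To this end I would pass to the bipartite graph $G_k$ whose left vertices are the rows with a $1$ in column $k$, whose right vertices are the rows with a $0$ in column $k$, and whose edges join $A$ to any member of $\F\setminus\F_{x_k}$ contained in $A$. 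Intersection-closure makes the down-set of each $A$ in $L$ well populated, so $G_k$ is rich in edges, and the reduced condition guarantees at least one edge incident to the two rows that differ only in column $k$. The existence of the desired injection is then equivalent, by Hall's marriage theorem, to the condition $|N(X)|\ge|X|$ for every set $X$ of left vertices.

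I would attack Hall's condition by localizing it on $L$. For a family $X$ of rows containing $x_k$, the neighbourhood $N(X)$ consists of rows below some member of $X$ and missing $x_k$; grouping the rows of $X$ by the meet-closure of their images reduces the global inequality to an interval inequality, namely a comparison between the sets inside and outside $\F_{x_k}$ within each interval $[C,A]$ of $L$ with $C=A\setminus\{x_k\}$ realized in $\F$. On modular or distributive lattices such interval inequalities can be verified directly, and I expect the argument to go through verbatim for those classes and, more generally, for the local configurations that are known to force abundant elements.

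The main obstacle is precisely the verification of Hall's condition in full generality: controlling $|N(X)|$ when the removable and non-removable sets are interleaved in a lattice with no modularity to exploit. This bottleneck is exactly the open core of Frankl's conjecture, and it is here that the averaging identity $\sum_k s_k^{\,2} = \sum_{A,B\in\F}|A\cap B|$, which turns assumed abundance of every column into a lower bound on the total size of the pairwise intersections, would have to be combined with the matching analysis to close the gap. I do not expect a short, purely local argument to suffice: the decisive step is the global control that the reduced intersection-closed structure is meant to provide, and making that control effective is the heart of the problem.
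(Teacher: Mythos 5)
There is a genuine gap, and you have in fact named it yourself: the statement you were given is not a theorem of the paper but Conjecture \ref{conj:matint}, the matrix formulation of Frankl's conjecture itself. The paper offers no proof of it; it only asserts (as ``straightforward to check'') the mutual equivalence of Conjectures \ref{conj:latfil}--\ref{conj:matuni}, and the rest of the paper works toward a \emph{stronger} conjecture, not toward proving this one. So there is no proof in the paper to compare against, and your proposal, read honestly, is a programme rather than a proof: after recasting $2\min_k \Sigma[Fe_k] \le m$ as the existence, for some column $k$, of an injection $\F_{x_k} \to \F \setminus \F_{x_k}$, and reducing that to Hall's condition on the bipartite containment graph $G_k$, you explicitly concede that verifying Hall's condition ``is exactly the open core of Frankl's conjecture'' and leave it unverified. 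Reducing an open conjecture to an equivalent or harder unverified condition does not establish the statement.

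Two further weaknesses are worth flagging. First, your reduction is lossy: the conjecture asks only for the counting inequality $|\F_{x_k}| \le m - |\F_{x_k}|$ for some $k$, i.e., the existence of an \emph{arbitrary} injection, whereas you demand one respecting containment ($\phi(A) \subseteq A$ with $x_k \notin \phi(A)$). That is a strictly stronger requirement, and you give no argument that it is attainable whenever the counting inequality holds; a priori Hall's condition could fail in $G_k$ for every column on a family that nevertheless satisfies the conjecture, so your route may be blocked rather than merely unfinished. The structural facts you do establish are far too weak to feed Hall: since $\varnothing \in \F$ by reducedness, every left vertex of $G_k$ has at least the neighbour $\varnothing$, which gives only $|N(X)| \ge 1$; and the reduced condition supplies a single edge between a pair of rows differing only at column $k$. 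Second, the averaging identity you invoke at the end, $\sum_{k=1}^n (\Sigma[Fe_k])^2 = \sum_{(A,B)\in\F^2}|A\cap B|$, is indeed proved in the paper (it is the computation $\Sigma[CC^T] = \sum_{k=1}^n (\Sigma[Ce_k])^2$ inside the proof of Proposition \ref{prop:symdif}), but the paper deploys it only in support of the stronger Conjecture \ref{conj:intdif}, via Proposition \ref{prop:symdifcor}; nothing in it converts ``assumed abundance of every column'' into the neighbourhood bounds your matching analysis needs, so it does not close the gap either.
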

\emph{Reinforcement}: The inequality is strict unless $m=2^n$
\begin{conj} \label{conj:matuni}
Let $F \in \Q^{m \times n}$ be a reduced union-closed matrix.
Then,
$$2 \max\{\Sigma [F e_k] : k=1,\ldots,n\} \ge m.$$
\end{conj}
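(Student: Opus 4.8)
The plan is to work through the complementation correspondence recorded in Section~\ref{notation}. Passing to complements turns a reduced union-closed matrix $F$ into a reduced intersection-closed matrix $G$ with $G(h,k)=1-F(h,k)$, so that $\Sigma[F e_k]=m-\Sigma[G e_k]$ and hence $2\max_k \Sigma[F e_k]\ge m$ is equivalent to $2\min_k \Sigma[G e_k]\le m$. Thus Conjecture~\ref{conj:matuni} is equivalent to Conjecture~\ref{conj:matint}, and I would feel free to argue in whichever of the two pictures is more convenient. The goal in the union-closed setting is to exhibit a single column $k$ with $\Sigma[F e_k]\ge m/2$, i.e.\ an element of $S$ lying in at least half of the sets of the family $\F$ associated with $F$.

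The first natural attempt is a global averaging argument: write $\Sigma F=\sum_{k=1}^{n}\Sigma[F e_k]$ and hope that the mean column sum $\Sigma F/n$ already reaches $m/2$, which would force the maximum to do so as well. I expect this to fail, and its failure is instructive. If the mean column sum were always at least $m/2$ for reduced matrices, then the maximum would be too, and the conjecture would be settled; since it is open, the average column sum need not reach $m/2$. Any successful argument must therefore \emph{localize}, identifying or guaranteeing a distinguished column, rather than treating all $n$ columns symmetrically.

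The more promising line is combinatorial and injective. Fix a column $k$ and split the rows of $F$ into the $1$-rows, which are exactly the rows of $\F_{x_k}$, and the $0$-rows. Choosing a set $B$ with $x_k\in B$, the map $A\mapsto A\cup B$ sends every row of $\F$ into $\F_{x_k}$, since $\F$ is union-closed and $x_k\in B\subseteq A\cup B$; if one could pick $B$ and $k$ so that this map injects the $0$-rows into the $1$-rows, the inequality would follow at once. The obstruction is that $A\mapsto A\cup B$ is badly non-injective, two distinct $0$-rows collapsing whenever they differ only inside $B$, and the injectivity defect resists any purely local estimate. Repairing the pairing appears to require exploiting the full lattice structure of $\F$ simultaneously, in combination with the weighted inequality established in Section~\ref{ineq}, which is designed precisely to quantify how the column sums constrain one another.

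The main obstacle is exactly the one that has kept Frankl's conjecture open since 1979: there is no known mechanism that forces any particular column to dominate, while the only genuinely global tool, namely averaging, is provably too weak. The statement is a reformulation of Frankl's conjecture and so is not proved here; the purpose of the present work is instead to isolate the stronger but more transparent conjecture of Section~\ref{stronger} and to establish, in Section~\ref{ineq}, an inequality that I expect to be an ingredient of any eventual proof.
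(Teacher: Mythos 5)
You correctly recognize that this statement is not a theorem of the paper but one of its equivalent formulations of Frankl's conjecture itself, open since 1979, so the paper offers no proof of it — only the remark that the formulations are ``straightforward to check'' to be equivalent; your complementation argument identifying it with Conjecture~\ref{conj:matint} (via $G(h,k)=1-F(h,k)$, so $\Sigma[Fe_k]=m-\Sigma[Ge_k]$) is exactly that check, matching the paper's observation that complements exchange reduced union-closed and reduced intersection-closed families. Your refusal to claim a proof, and your diagnosis of why global averaging and the union-injection map $A\mapsto A\cup B$ fall short, is the correct assessment and is consistent with the paper's treatment, which likewise leaves the statement as a conjecture and instead pursues the stronger Conjecture~\ref{conj:inttwo} and the supporting inequality of Section~\ref{ineq}.
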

\emph{Reinforcement}: The inequality is strict unless $m=2^n$.

We refer to any of these equivalent conjectures and their reinforcements as \emph{(reinforced) Frankl's conjecture}.\\

Let $\Delta_n \subseteq \Q^n$ be the $(n-1)$-dimensional symplex generated by unit coordinate vectors $e_k$, for $k \in \{1,\ldots,n\}$, that is
$$\Delta_n = \{x \in \Q^n : x_k \ge 0 \ (k=1,\ldots,n) , \quad \Sigma x = 1\}.$$

\begin{prop} \label{prop:conjeq1}
Frankl's conjecture is equivalent to the following statement.
\end{prop}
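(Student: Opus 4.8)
The plan is to route everything through the matrix formulation already in hand, namely Conjecture \ref{conj:matint}: since the proposition merely rephrases Frankl's conjecture, and the matrix intersection form has been shown equivalent to it, it suffices to show that the displayed statement holds for a given reduced intersection-closed matrix $F \in \Q_\cap^{m \times n}$ if and only if $2\min\{\Sigma[Fe_k] : k = 1,\ldots,n\} \le m$. The whole argument then rests on one elementary observation about the simplex $\Delta_n$. The map $x \mapsto \Sigma[Fx]$ is linear, because $\Sigma[Fx] = \sum_k x_k\,\Sigma[Fe_k]$, i.e. it is the convex combination of the column sums $\Sigma[Fe_k]$ with weights $x_k$. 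A linear functional on the polytope $\Delta_n$ attains its minimum at a vertex, and the vertices of $\Delta_n$ are exactly the unit coordinate vectors $e_1,\ldots,e_n$; hence $\min\{\Sigma[Fx] : x \in \Delta_n\} = \min\{\Sigma[Fe_k] : k=1,\ldots,n\}$. The two inequalities thus coincide term by term.

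With this identity available, both implications are immediate. First I would assume the proposition's statement and specialize $x$ to each vertex $e_k$, recovering Conjecture \ref{conj:matint}; since every reduced intersection-closed matrix is the characteristic matrix of a reduced intersection-closed family (by the correspondence and matrix criteria of Section \ref{notation}), this yields Frankl's conjecture in full. Conversely, assuming Frankl's conjecture I would invoke the vertex identity to replace $\min_k \Sigma[Fe_k]$ by $\min_{x \in \Delta_n}\Sigma[Fx]$, recovering the statement. Because the quantifier ranges over the same class $\Q_\cap^{m \times n}$ on both sides, no family is gained or lost in the translation.

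The part I expect to be the main obstacle is bookkeeping rather than depth: one must check that the quantifier in the new statement ranges over precisely $\Q_\cap^{m \times n}$ (equivalently, over all reduced intersection-closed families), and that the normalization $\Sigma x = 1$ imposed on $\Delta_n$ does not secretly rescale the comparison with $m$. Should the statement also carry a strictness clause in the spirit of the reinforcements, I would handle it in parallel, noting that $2\min_{x \in \Delta_n}\Sigma[Fx] < m$ is strict exactly when $2\min_k \Sigma[Fe_k] < m$ is, and that the extremal case $m = 2^n$ is precisely the one in which every column sum equals $m/2$, so that $\Sigma[Fx] \equiv m/2$ on $\Delta_n$.
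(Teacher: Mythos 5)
Your central idea---that $x \mapsto \Sigma[Fx]$ is linear, so its extrema over $\Delta_n$ are attained at the vertices $e_1,\ldots,e_n$---is exactly the engine of the paper's proof, but you have wired it to the wrong dual form of the conjecture, and this makes the claim you rest everything on false. Conjecture \ref{conj:matcom} concerns reduced \emph{union-closed} matrices and asserts the \emph{existence} of $x \in \Delta_n$ with $2\Sigma[Fx] \ge m$; for a fixed matrix $F$ this is equivalent to $2\max_k \Sigma[Fe_k] \ge m$ (the maximum of a linear functional over $\Delta_n$ sits at a vertex), i.e.\ to the condition in Conjecture \ref{conj:matuni}, and \emph{not} to $2\min_k \Sigma[Fe_k] \le m$. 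Your assertion that the two inequalities ``coincide term by term'' fails concretely within your own setup: for the reduced intersection-closed family $\{\varnothing,\{a\},\{b\},\{c\},\{a,b,c\}\}$ (so $m=5$, $n=3$), every column sum equals $2$, hence $2\min_k \Sigma[Fe_k] = 4 \le 5$ holds, while $\Sigma[Fx] = 2$ for every $x \in \Delta_3$, so no $x$ satisfies $2\Sigma[Fx] \ge 5$. The conditions ``some column sum is at most $m/2$'' and ``some convex combination of column sums is at least $m/2$'' are dual to one another, not equal; passing between them requires the complementation map $F \mapsto \overline{F}$, which exchanges $\Q_\cap^{m \times n}$ with $\Q_\cup^{m \times n}$ and sends each column sum $s$ to $m-s$---a step your argument never performs.

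There is also a quantifier slip in your first implication: Conjecture \ref{conj:matcom} is existential in $x$, so you cannot ``specialize $x$ to each vertex $e_k$''; specialization is only available for universal statements. The two directions actually run as follows (and this is the paper's proof). Conjecture \ref{conj:matuni} implies Conjecture \ref{conj:matcom} because the maximizing vertex $e_k$ is itself an admissible witness $x$; conversely, if Conjecture \ref{conj:matuni} fails for some $F$, i.e.\ every column sum is below $m/2$, then for every $x \in \Delta_n$ one has $\Sigma[Fx] = \sum_{k} x_k \Sigma[Fe_k] \le \max_k \Sigma[Fe_k] < m/2$, so Conjecture \ref{conj:matcom} fails for $F$ as well. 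If you replace Conjecture \ref{conj:matint} by Conjecture \ref{conj:matuni} and minima by maxima throughout, your argument becomes precisely this one; as written, however, the per-matrix equivalence at its core is false.
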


\begin{conj} \label{conj:matcom}
Let $F \in \Q^{m \times n}$ be a reduced union-closed matrix.
Then, there is $x \in \Delta_n$ such that $2\Sigma [F x]  \ge m$.
\end{conj}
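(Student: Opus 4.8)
The plan is to show that the apparent strengthening in Conjecture~\ref{conj:matcom}---where the point $x$ ranges over the whole simplex $\Delta_n$ rather than only over its vertices---adds nothing, so that Conjecture~\ref{conj:matcom} is in fact equivalent to Conjecture~\ref{conj:matuni}, which has already been established as one of the formulations of Frankl's conjecture. Thus it suffices to prove the identity
$$\max_{x \in \Delta_n} \Sigma [Fx] = \max_{1 \le k \le n} \Sigma [F e_k].$$

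First I would observe that $x \mapsto \Sigma [Fx]$ is a linear functional on $\Q^n$: the sum of the entries of the column vector $Fx$ equals $\sum_{h} \sum_{k} F(h,k)\, x_k = \sum_{k} x_k \Sigma [F e_k]$, so $\Sigma [Fx]$ is just the combination of the column sums $\Sigma [F e_k]$ with weights $x_k$. The second ingredient is that every $x \in \Delta_n$ is a convex combination of the unit coordinate vectors, namely $x = \sum_{k=1}^n x_k e_k$ with $x_k \ge 0$ and $\Sigma x = 1$, and that the $e_k$ are precisely the vertices of $\Delta_n$. Combining these two facts, $\Sigma [Fx]$ is a convex combination of the $\Sigma [F e_k]$, whence it never exceeds $\max_k \Sigma [F e_k]$, with equality attained at the vertex $e_k$ realising that maximum. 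This yields the displayed identity.

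From the identity the equivalence follows in both directions. If Conjecture~\ref{conj:matuni} holds, then choosing a column $k$ that achieves $\max_k \Sigma [F e_k]$ and setting $x = e_k \in \Delta_n$ produces a point of the simplex with $2\Sigma [Fx] \ge m$, which is Conjecture~\ref{conj:matcom}. Conversely, if some $x \in \Delta_n$ satisfies $2\Sigma [Fx] \ge m$, then by the identity $2\max_k \Sigma [F e_k] \ge 2\Sigma [Fx] \ge m$, which is Conjecture~\ref{conj:matuni}. Hence the two statements are logically equivalent, and since Conjecture~\ref{conj:matuni} is one of the equivalent formulations of Frankl's conjecture, so is Conjecture~\ref{conj:matcom}.

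Because the reduction is a one-line convexity argument, I do not anticipate a genuine obstacle. The only point deserving a word of care is that $\Delta_n$ is taken inside $\Q^n$, so one cannot appeal to compactness to guarantee that the supremum is a maximum; but the maximising vertex $e_k$ is itself a rational (indeed integral) point of $\Delta_n$, so restricting to rational $x$ does not change the value, and the argument goes through verbatim over $\Q$.
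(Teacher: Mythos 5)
Your proposal is correct and matches the paper's own argument for Proposition~\ref{prop:conjeq1}: both reduce Conjecture~\ref{conj:matcom} to Conjecture~\ref{conj:matuni} by noting that $\Sigma[Fx] = \sum_k x_k \Sigma[Fe_k]$ is a convex combination of the column sums, so its maximum over $\Delta_n$ is attained at a vertex $e_k$. The paper states this contrapositively (if every column sum fails the bound, so does every point of the simplex), but the underlying one-line convexity argument is identical to yours.
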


\begin{proof}(of Proposition \ref{prop:conjeq1}).
Conjecture \ref{conj:matuni} is a particular case of Conjecture \ref{conj:matcom}.
Conversely, suppose that Conjecture \ref{conj:matuni} do not hold.
Then, for every $k \in \{1,\ldots,n\}$ it is $2\Sigma [F e_k]< m$.
Therefore, for every $x \in \Delta_n$ it would hold
$$2\Sigma [F x] = 2 \sum_{i=1}^n x_i \Sigma [F e_i] \le 2 \max\{\Sigma [F e_k] : k=1,\ldots,n\}<m,$$
hence Conjecture \ref{conj:matcom} would not hold.
\end{proof}
The formulation of Conjecture \ref{conj:matcom} suggests a strategy to try to solve the problem. We illustrate this strategy by stating another conjecture, clearly equivalent to Conjecture \ref{conj:matcom}:

\begin{conj} \label{conj:matfun}
There is a weight function $w: \Q_\cup^{m \times n} \to \Delta_n$ such that,
for every reduced union-closed matrix $F \in \Q^{m \times n}$, the following holds:
$2\Sigma [F w(F)] \ge m$.
\end{conj}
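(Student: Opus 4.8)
The plan is to exhibit an explicit weight function, rather than merely invoking the equivalence of Conjecture~\ref{conj:matfun} with Conjecture~\ref{conj:matcom}. The most natural candidate is the normalization of the column sums: writing $c_k=\Sigma[Fe_k]$ for the number of sets of $\F$ containing $x_k$, set $w(F)=(c_1,\ldots,c_n)/\sum_k c_k$ (note $\sum_k c_k=\Sigma F>0$ for any reduced family, since $S\in\F$ forces an all-ones row). For this choice one computes $2\Sigma[Fw(F)]=2\sum_k c_k^2/\sum_k c_k$, so the required inequality $2\Sigma[Fw(F)]\ge m$ becomes the purely numerical statement $2\sum_k c_k^2\ge m\sum_k c_k$. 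A first encouraging check: on the Boolean lattice $F$ (with $m=2^n$) every $c_k=2^{n-1}$ and both sides equal $n\,2^{2n-1}$, so the bound holds with equality --- exactly the regime predicted by Poonen's reinforcement.

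The key step is to read the two moments combinatorially and then prove the inequality from union-closure. Since $\sum_k c_k=\sum_h|A_h|$ counts incidences and $\sum_k c_k^2=\sum_{h,i}|A_h\cap A_i|$ counts, over ordered pairs of sets, their common elements, the target is
\[
2\sum_{h,i}|A_h\cap A_i|\ \ge\ m\sum_h|A_h|.
\]
I would attack this by summing the identity $|A_h\cap A_i|=|A_h|+|A_i|-|A_h\cup A_i|$ over all ordered pairs and using that $A_h\cup A_i\in\F$: closure is what lets one control the resulting double sum of union-cardinalities back in terms of quantities indexed by $\F$, turning the inequality into a comparison between the second moment of the frequency vector $(c_k)$ and its first moment, to be handled by convexity (Cauchy--Schwarz) together with the lattice structure.

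The hard part --- and the reason no short argument is expected --- is that for any fixed, explicitly-defined $w$ the inequality $2\Sigma[Fw(F)]\ge m$ is logically at least as strong as Frankl's conjecture, because Conjecture~\ref{conj:matfun} is equivalent to Conjecture~\ref{conj:matcom} and hence, by Proposition~\ref{prop:conjeq1}, to Frankl's conjecture itself. Concretely, the column-sum weight can already fail when Frankl merely holds: the $c$-weighted mean $\sum_k c_k^2/\sum_k c_k$ may lie strictly below $\max_k c_k$, so a family whose frequencies are dominated by one barely-majority element together with many mid-sized ones could dilute the average below $m/2$. This signals that a successful $w$ must be adaptive to the lattice --- placing extra mass on elements that sit low in the order yet, by union-closure, are forced to co-occur widely --- and that proving such a choice works for \emph{every} reduced union-closed matrix will require a genuinely new structural inequality for union-closed lattices, which is precisely the open content of the conjecture.
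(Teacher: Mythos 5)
There is a fundamental mismatch here: the statement you were asked to prove is a \emph{conjecture}, and the paper contains no proof of it --- it is introduced as ``clearly equivalent to Conjecture~\ref{conj:matcom}'' (the equivalence being trivial: if for every $F$ some $x \in \Delta_n$ works, define $w(F)$ to be such an $x$, and conversely), and hence, via Proposition~\ref{prop:conjeq1}, equivalent to Frankl's conjecture itself, which is open. Your proposal correctly senses this in its final paragraph, but as a proof it therefore has an unfillable gap: the central inequality $2\sum_k c_k^2 \ge m \sum_k c_k$ is asserted as the target and never established, and your sketched attack (summing $|A_h \cap A_i| = |A_h| + |A_i| - |A_h \cup A_i|$ over ordered pairs and invoking union-closure plus Cauchy--Schwarz) is only a plan; no step of it controls the double sum of union-cardinalities, and no known argument does.

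What is worth noting is that your explicit candidate is exactly the paper's $w_1$ (the $r=1$ member of the sequence $w_r$ defined before Proposition~\ref{prop:conjeq2}), and your numerical inequality $2\sum_k c_k^2 \ge m\sum_k c_k$, read combinatorially as $2\sum_{h,i}|A_h \cap A_i| \ge m\sum_h |A_h|$, is verbatim the paper's \emph{strong union-closed sets conjecture} (Conjectures~\ref{conj:inttwo} and \ref{conj:intdif}, obtained from Conjecture~\ref{conj:inters} at $r=1$). So you have independently rediscovered the paper's Section~\ref{stronger}, including the equality case on the Boolean lattice --- but this is a strengthening of Frankl's conjecture, not a route to proving Conjecture~\ref{conj:matfun}; the paper explicitly states no counterexamples are known but offers no proof, supplying only the second-moment identity $\sum_{(A,B)\in\F^2}|A \otimes B| = \sum_{(A,B)\in\F^2}|A \oplus B| + \sum_{x \in S}\delta_x^2$ of Section~\ref{ineq} (which is the precise, unconditional version of the convexity step you gesture at) and the resulting \emph{sufficient} condition of Proposition~\ref{prop:symdifcor}. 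One further caution: your claim that the column-sum weight ``can already fail when Frankl merely holds'' is unsubstantiated --- it would amount to a counterexample to Conjecture~\ref{conj:inttwo}, which the paper reports not to have found; all that is true a priori is that $\sum_k c_k^2 / \sum_k c_k \le \max_k c_k$, which leaves open whether the weighted mean can dip below $m/2$.
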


The strategy, then, becomes to find a proper weight function $w$, and the rest of the paper will be in this direction.

We choose a simple sequence of functions $(w_0,w_1,w_2\ldots)$ where, if Frankl's conjecture holds, a function $w_r$ can be found which satisfies Conjecture \ref{conj:matfun}.
Let $w_r : \Q_\cup^{m \times n} \to \Delta_n$ be the function which sends a matrix $F$ to a vector whose components are proportional to the $r$-th powers of the sums of columns:
$$[w_r(F)]_k = \frac {(\Sigma[F e_k])^r}{\sum_{i=1}^n(\Sigma[F e_i])^r}.$$
The more $r$ increases, the more the weight is concentrated on the column of highest sum. Hence,

\begin{prop} \label{prop:conjeq2}
Reinforced Frankl's conjecture is equivalent to the following statement.
\end{prop}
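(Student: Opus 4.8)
The statement I read off from the construction is the natural one: reinforced Frankl's conjecture is equivalent to asserting that for every reduced union-closed matrix $F\in\Q^{m\times n}$ there is an exponent $r\in\N$ with $2\Sigma[F w_r(F)]\ge m$, the inequality being strict unless $m=2^n$. The plan is to exploit the fact that, along the sequence $(w_0,w_1,\ldots)$, the quantity $\Sigma[F w_r(F)]$ increases to the maximal column sum, so that Conjecture~\ref{conj:matuni} is recovered in the limit $r\to\infty$. Write $s_k:=\Sigma[Fe_k]$ for the column sums and $M:=\max_k s_k$. Since $F$ is reduced, every column contains both a $0$ and a $1$, hence $1\le s_k\le m-1$ and in particular $s_k>0$ for all $k$. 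Substituting the definition of $w_r$ gives the closed form
$$\Sigma[F w_r(F)]=\sum_{k=1}^n s_k\,\frac{s_k^{\,r}}{\sum_{i=1}^n s_i^{\,r}}=\frac{\sum_{k=1}^n s_k^{\,r+1}}{\sum_{k=1}^n s_k^{\,r}}.$$
Two elementary facts about this expression drive everything: being a convex combination of the $s_k$ with strictly positive coefficients, it never exceeds $M$; and factoring $M^r$ out of numerator and denominator, the terms with $s_k<M$ tend to $0$, so $\Sigma[F w_r(F)]\to M$ as $r\to\infty$ (the ratio of consecutive power sums is in fact nondecreasing, by log-convexity of $r\mapsto\sum_k s_k^{\,r}$, and converges to the largest base).

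For the easy direction I would assume the statement and deduce reinforced Frankl. If for some $r$ we have $2\Sigma[F w_r(F)]\ge m$, then $\Sigma[F w_r(F)]\le M$ forces $2M\ge m$, which is Conjecture~\ref{conj:matuni}; and whenever the statement supplies a strict inequality $2\Sigma[F w_r(F)]>m$ the same bound yields $2M>m$, so the reinforcement is inherited verbatim.

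For the converse I would assume reinforced Frankl and produce a suitable $r$. If $m\ne 2^n$, the reinforcement gives the strict gap $2M>m$, i.e. $M>m/2$; since $\Sigma[F w_r(F)]\to M>m/2$, every sufficiently large $r$ satisfies $2\Sigma[F w_r(F)]>m$, which is exactly the strict inequality the statement demands in this case. The remaining case $m=2^n$ is where the correspondence with the Boolean lattice is invoked: then $\F$ is the full powerset of $S$, each element lies in exactly half of the $2^n$ sets, so $s_k=2^{n-1}=m/2$ for every $k$, and the closed form collapses to $\Sigma[F w_r(F)]=m/2$ for all $r$, giving $2\Sigma[F w_r(F)]=m$ on the nose and matching the reinforcement's exemption at $m=2^n$. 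Because for fixed $m,n$ there are only finitely many reduced union-closed matrices, taking the largest of the finitely many exponents produces a single $r=r(m,n)$ for which $w_r$ meets Conjecture~\ref{conj:matfun}, matching its ``there is a weight function'' formulation.

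The main obstacle is the boundary bookkeeping between the strict and non-strict regimes, and it is precisely here that the \emph{reinforcement} is indispensable rather than decorative. If one only assumed the plain inequality $2M\ge m$, a matrix with $2M=m$ but unequal column sums would have $\Sigma[F w_r(F)]<m/2$ for every finite $r$, the supremum $M$ being approached but never attained; no member of the sequence $(w_r)$ could then certify the bound. Passing from the limiting value $M$ to a value actually attained at some finite $r$ is legitimate only because the strict gap $2M>m$ is guaranteed for all $m\ne 2^n$, so the delicate step is verifying that this is the sole case needing care and that at $m=2^n$ equality is genuinely attained; establishing those two points completes the equivalence.
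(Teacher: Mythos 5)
Your proposal is correct, and its engine is the same as the paper's: the closed form $\Sigma[F w_r(F)]=\sum_k s_k^{r+1}/\sum_k s_k^{r}$, the bound by the maximal column sum $M$, the limit $\Sigma[F w_r(F)]\to M$ combined with the strict gap $2M>m$ when $m\neq 2^n$, and the observation that for $m=2^n$ all column sums equal $m/2$, so every $r$ gives equality. The genuine difference is in the statement you prove. The paper's Conjecture~\ref{conj:powers} carries no strictness clause, and accordingly the paper's backward direction merely chains Conjecture~\ref{conj:powers} $\Rightarrow$ Conjecture~\ref{conj:matcom} $\Rightarrow$ Conjecture~\ref{conj:matuni}, recovering only the \emph{plain} conjecture; taken literally, the claimed equivalence with reinforced Frankl is therefore left incomplete in the paper (it would require knowing that plain Frankl implies its reinforcement, which is not known). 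You instead build the clause ``strict unless $m=2^n$'' into the target statement, so your backward direction, via $\Sigma[F w_r(F)]\le M$, inherits the reinforcement verbatim; your closing analysis of a hypothetical matrix with $2M=m$ but unequal column sums (which defeats every finite $r$) is precisely the right explanation of why this clause, or the reinforcement on the Frankl side, cannot be dispensed with. So you prove a tighter, genuinely two-way equivalence for a slightly strengthened form of Conjecture~\ref{conj:powers}. Two small remarks: your direct backward argument replaces the paper's detour through Conjecture~\ref{conj:matcom} and Proposition~\ref{prop:conjeq1}, with identical content; and the final uniform choice of $r(m,n)$ by finiteness is unnecessary for Conjecture~\ref{conj:matfun}, since the weight function there is allowed to depend on $F$.
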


\begin{conj} \label{conj:powers}
Let $F \in \Q^{m \times n}$ be a reduced union-closed matrix.
Then, there is $r \in \N$ such that $2\Sigma [F w_r(F)] \ge m$.
\end{conj}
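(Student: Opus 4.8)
The plan is to reduce Conjecture \ref{conj:powers} for a fixed $F$ to a transparent condition by analysing the scalar sequence $g_F(r):=\Sigma[F\,w_r(F)]$, and then to match that condition against the reinforcement of Conjecture \ref{conj:matuni}. First I would set $s_k:=\Sigma[Fe_k]$ (so every $s_k\ge 1$, since a reduced union-closed matrix has an all-ones row), $M:=\max_k s_k$ and $p_r:=\sum_{k=1}^n s_k^{\,r}$. Arguing exactly as in the proof of Proposition \ref{prop:conjeq1} via $\Sigma[Fx]=\sum_k x_k\,\Sigma[Fe_k]$, one gets
\[ g_F(r)=\Sigma[F\,w_r(F)]=\frac{\sum_k s_k^{\,r+1}}{\sum_k s_k^{\,r}}=\frac{p_{r+1}}{p_r}. \]
Two elementary facts then drive everything. (i) $g_F$ is non-decreasing: by Cauchy--Schwarz $p_{r+1}^2=\big(\sum_k s_k^{\,r/2}s_k^{\,(r+2)/2}\big)^2\le p_r\,p_{r+2}$, i.e.\ $(p_r)$ is log-convex, so $p_{r+1}/p_r\le p_{r+2}/p_{r+1}$. (ii) $g_F(r)$ is a convex combination of the $s_k$ with strictly positive weights $s_k^{\,r}/p_r$, so $g_F(r)\le M$ with equality iff all $s_k$ coincide, while isolating the terms with $s_k=M$ gives $g_F(r)\to M$. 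Since $g_F$ increases to $M$, Conjecture \ref{conj:powers} holds for $F$ \emph{iff} either $2M>m$, or $2M=m$ and all column sums are equal.

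Now I would compare this with the reinforcement of Conjecture \ref{conj:matuni}, which for a fixed $F$ asserts $2M\ge m$ with equality only when $m=2^n$, i.e.\ only for the powerset. The powerset has every column sum equal to $2^{n-1}=m/2$, so its condition implies the one just derived; hence reinforced Frankl immediately implies Conjecture \ref{conj:powers}. For the converse the two pointwise conditions can differ only on the equality layer $2M=m$, so it suffices to prove the structural claim: \emph{a reduced union-closed matrix with all column sums equal and $2M=m$ --- equivalently, a reduced union-closed family in which every element lies in exactly half of the sets --- must be the powerset.}

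To prove the claim I would pass to the associated lattice $L$ of Section \ref{notation}, where $|L|=m$ and the ground-set elements are the irreducible ideals of $L$; there the column sum of the element corresponding to an irreducible ideal $I$ equals $m-|I|$, so the hypothesis says every irreducible ideal has exactly $m/2$ elements, i.e.\ every meet-irreducible $a$ satisfies $|{\downarrow}a|=m/2$. Every coatom is meet-irreducible and hence has $|{\downarrow}c|=m/2$; and any meet-irreducible $a$ lies below some coatom $c$, whence $\downarrow a\subseteq\downarrow c$ with equal cardinalities forces $a=c$. Thus \emph{every meet-irreducible of $L$ is a coatom}, and applying the same reasoning to the complementary intersection-closed family (i.e.\ to $L^{op}$) shows \emph{every join-irreducible is an atom}, with $|{\uparrow}a|=m/2$ for every atom $a$. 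It then remains to upgrade this balanced atomistic-and-coatomistic structure to the conclusion that $L$ is Boolean.

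The main obstacle is precisely this last upgrade: being atomistic and coatomistic does not by itself force Boolean (the lattices $M_k$ are a standing warning), so the exact half-sizes must be used decisively. My intended route is an induction on $n$ that seeks a \emph{free} coordinate $x$, meaning $C\cup\{x\}\in\F\iff C\in\F$ for $x\notin C$; equivalently, writing $\F^{-}=\{A\in\F:x\notin A\}$ for the deletion and $\F^{+}=\{A\setminus\{x\}:x\in A\in\F\}$ for the contraction (both reduced union-closed of size $m/2$ on $n-1$ elements, with $|\F^{+}_y|+|\F^{-}_y|=m/2$ for each $y\ne x$), I want $\F^{+}=\F^{-}$. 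A free coordinate splits $\F$ as a cylinder $\F^{-}\times\{0,1\}$ in which $\F^{-}$ again has every element in exactly half of its sets, closing the induction against the one-element powerset base case. Extracting such an $x$ from the atom/coatom description together with the balance condition is where the genuine combinatorial work lies, and I expect it to be the crux of the whole argument.
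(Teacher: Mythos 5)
Your analysis of the sequence $g_F(r)=\Sigma[F\,w_r(F)]=p_{r+1}/p_r$ is correct, and it is in fact sharper than what the paper itself does: the paper's proof of Proposition \ref{prop:conjeq2} uses only the limit $\lim_{t\to\infty}\Sigma[Fw_t(F)]=\max_k\Sigma[Fe_k]$, together with the observation that for $m=2^n$ all column sums equal $m/2$, whereas you additionally establish monotonicity via log-convexity and extract the exact pointwise criterion: Conjecture \ref{conj:powers} holds for a fixed $F$ iff $2M>m$, or $2M=m$ with all column sums equal. Your deduction of Conjecture \ref{conj:powers} from reinforced Frankl coincides with the paper's first direction, and your deduction of $2M\ge m$ from Conjecture \ref{conj:powers} recovers the paper's unreinforced converse (which the paper obtains via Conjecture \ref{conj:matcom}).

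The genuine gap is in your converse direction. You reduce ``Conjecture \ref{conj:powers} $\Rightarrow$ reinforced Frankl'' to the structural claim that a reduced union-closed family in which every element lies in exactly half of the sets must be the powerset, and you do not prove that claim: your lattice observations (every meet-irreducible is a coatom, every join-irreducible is an atom, each with half-sized principal ideal or filter) are correct, but the passage from this balanced atomistic-and-coatomistic structure to a Boolean lattice --- equivalently, the extraction of a free coordinate for your induction --- is exactly what you leave as ``the crux''. This is not a routine lemma to be filled in later: it is the equality case of Poonen's reinforcement specialized to families with all frequencies equal, i.e.\ a special case of the very conjecture whose equivalence you are trying to establish, and nothing in your sketch gives traction on it; in particular, assuming Conjecture \ref{conj:powers} does not help, since any balanced family satisfies that conjecture automatically, so the hypothesis is inert precisely where you need it. (Even the parenthetical assertion that deletion and contraction are reduced is not automatic for a general coordinate.) Note that the paper sidesteps this issue by proving less: its converse only shows Conjecture \ref{conj:powers} $\Rightarrow$ Conjecture \ref{conj:matcom} $\Rightarrow$ Conjecture \ref{conj:matuni} \emph{without} the reinforcement, so the published argument really establishes the sandwich ``reinforced Frankl $\Rightarrow$ Conjecture \ref{conj:powers} $\Rightarrow$ Frankl''. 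If you weaken your goal to that sandwich, your write-up is complete and matches the paper; if you insist on the pointwise equivalence as literally stated in Proposition \ref{prop:conjeq2}, your structural claim is the missing step --- one which, it is worth noting, the paper's own proof silently omits as well.
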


\begin{proof}(of Proposition \ref{prop:conjeq2}).
We prove first that reinforced Conjecture \ref{conj:matuni} implies Conjecture \ref{conj:powers} in case $m \neq 2^n$. 
If reinforced Conjecture \ref{conj:matuni} holds, then
$$2 \lim_{t \to \infty}\Sigma [F w_t(F)] = 2 \max\{\Sigma [F e_k] : k=1,\ldots,n\} > m,$$
hence there is $r$ which satisfies Conjecture \ref{conj:powers}.
In the other direction Conjecture \ref{conj:powers} implies Conjecture \ref{conj:matcom}, therefore Conjecture \ref{conj:matuni}.
For the case $m= 2^n$, for any $t$ we have all equalities
$$2 \Sigma [F w_t(F)] = 2 \max\{\Sigma [F e_k] : k=1,\ldots,n\} = m,$$
as $\Sigma [F w_t(F)]$ is the same for every $t$, being a weighted mean of elements all equal to $m/2$.
\end{proof}

Conjecture \ref{conj:powers} has a nice interpretation in terms of union-closed families.

\begin{prop} \label{prop:conjeq3}
Conjecture \ref{conj:powers} is equivalent to the following statement.
\end{prop}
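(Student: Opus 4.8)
The plan is to rewrite both sides of the inequality in Conjecture \ref{conj:powers} entirely in terms of the union-closed family $\F = \{A_1,\ldots,A_m\}$ whose characteristic matrix is $F$. The first observation is that the $k$-th column sum carries a direct combinatorial meaning: $\Sigma[F e_k]$ equals $|\F_{x_k}|$, the number of members of $\F$ containing $x_k$. Writing $d_k := |\F_{x_k}|$ for short and using the linearity already exploited in the proof of Proposition \ref{prop:conjeq1}, the scalar appearing in the conjecture unfolds as
$$\Sigma[F w_r(F)] = \sum_{k=1}^n [w_r(F)]_k\, d_k = \frac{\sum_{k=1}^n d_k^{\,r+1}}{\sum_{k=1}^n d_k^{\,r}},$$
so that $2\Sigma[F w_r(F)] \ge m$ is equivalent, for each fixed $r$, to $2\sum_k d_k^{\,r+1} \ge m \sum_k d_k^{\,r}$.

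The key step is a double-counting identity expressing the power sums $\sum_k d_k^{\,r}$ through cardinalities of intersections. For each $r \ge 1$ I would count in two ways the flags $(x_k;\, B_1,\ldots,B_r)$ consisting of an element of $S$ together with an ordered $r$-tuple of members of $\F$ all containing that element. Grouping first by the element $x_k$ gives $d_k^{\,r}$ choices for the tuple, hence $\sum_k d_k^{\,r}$ flags in total; grouping first by the tuple gives $|B_1 \cap \cdots \cap B_r|$ choices for the element. This yields
$$\sum_{k=1}^n d_k^{\,r} = \sum_{(B_1,\ldots,B_r) \in \F^r} |B_1 \cap \cdots \cap B_r|,$$
together with the same identity with $r$ replaced by $r+1$. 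Substituting both into the reformulated inequality turns Conjecture \ref{conj:powers} into the assertion that, for every reduced union-closed family $\F$, there exists $r \in \N$ for which twice the total intersection-size over ordered $(r+1)$-tuples is at least $|\F|$ times the total intersection-size over ordered $r$-tuples, which is precisely the intended intersection-cardinality formulation.

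I expect the work here to be bookkeeping rather than a genuine obstacle: the identity above is a clean instance of counting flags, and everything else is substitution. The only points demanding care are the boundary behaviour and well-definedness. The case $r=0$ should be checked to reproduce $\sum_k d_k^{\,0} = n = |S|$ consistently (the intersection over the empty tuple being $S$), and the denominators $\sum_k d_k^{\,r}$ must be verified strictly positive so that the passage between the matrix inequality and the intersection inequality is reversible term by term. The latter holds because $\F$ reduced forces $d_k \ge 1$ for every $k$, since each $x_k$ lies in at least the member $S \in \F$. Once these are settled, the equivalence follows in both directions at once, each rewriting being an exact equality for every fixed $r$, so the existential quantifier over $r$ transfers verbatim between the two statements.
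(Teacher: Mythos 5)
Your proposal is correct and follows essentially the same route as the paper: identifying $\Sigma[Fe_k]$ with $|\F_{x_k}|$, reducing the conjecture to $2\sum_k d_k^{r+1} \ge m\sum_k d_k^r$, and then applying the double-counting identity $\sum_{x \in S}|\F_x|^r = \sum_{\A \in \F^r}|\bigcap \A|$, which is exactly the paper's exchange of summation order. Your extra remarks on the $r=0$ case and the strict positivity of the denominators are sound (the paper leaves them implicit) and do not change the substance of the argument.
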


\begin{conj} \label{conj:inters}
Let $\F$ be a reduced union-closed family on a set $S$.
Then, there is $r \in \N$ such that the average cardinality of intersections of $r$-tuples of members of $\F$ is not greater than twice the average cardinality of intersections of $(r+1)$-tuples of members of $\F$, in formulas:
$$2\frac{\sum_{\A \in \F^{r+1}}|\bigcap \A|}{|\F|^{r+1}} \ge \frac{\sum_{\A \in \F^r}|\bigcap \A|}{|\F|^r}.$$
\end{conj}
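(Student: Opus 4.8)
The plan is to prove that the required $r \in \N$ exists by first rewriting both averages as explicit polynomials in the column sums of the characteristic matrix, and then analysing the growth of their ratio. Write $c_k = \Sigma[F e_k] = |\F_{x_k}|$ for the number of members of $\F$ containing $x_k$, and recall $|\F| = m$. For an ordered tuple $\A = (A_1,\ldots,A_r) \in \F^r$ one has $|\bigcap \A| = \sum_{k=1}^n \prod_{j=1}^r [x_k \in A_j]$; summing the factorising indicator over each coordinate of $\A$ independently, I would obtain
$$\sum_{\A \in \F^r} \Big|\bigcap \A\Big| = \sum_{k=1}^n c_k^{\,r},$$
so that the average intersection size of $r$-tuples is $I(r) := \sum_k c_k^{\,r}/m^r$. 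The inequality to be established, $2 I(r+1) \ge I(r)$, then reads $2\sum_k c_k^{\,r+1} \ge m \sum_k c_k^{\,r}$, i.e. $2\rho(r) \ge m$ where $\rho(r) := \sum_k c_k^{\,r+1}\big/\sum_k c_k^{\,r}$.

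The substantive step is to control $\rho(r)$. Being a weighted average of the $c_k$ with weights $c_k^{\,r}$, it satisfies $\rho(r) \le c_{\max} := \max_k c_k$ for every $r$; by the Cauchy--Schwarz inequality $(\sum_k c_k^{\,r+1})^2 \le (\sum_k c_k^{\,r})(\sum_k c_k^{\,r+2})$, so $\rho$ is non-decreasing; and since the terms of maximal base dominate the power sums, $\rho(r) \to c_{\max}$. Thus $2\rho(r)$ increases to $2 c_{\max}$, and whether some $r$ satisfies $2\rho(r) \ge m$ is governed entirely by the comparison of $2c_{\max}$ with $m$: if $2 c_{\max} > m$ the convergence yields such an $r$ (indeed all large $r$); if $2 c_{\max} < m$ then $2\rho(r) \le 2c_{\max} < m$ for every $r$ and no $r$ works; and if $2c_{\max} = m$ one checks that $2\rho(r) \ge m$ forces $\rho(r)=c_{\max}$, which happens only when every $c_k$ equals $c_{\max}$, in which case $\rho(r)=c_{\max}$ for all $r$ and equality holds throughout.

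This reduces the existence of $r$ to proving $2 \max_k |\F_{x_k}| \ge |\F|$, which is exactly Frankl's conjecture in the form of Conjecture \ref{conj:matuni}, together with the statement that the borderline case $2c_{\max}=m$ arises for a reduced union-closed family only when $\F$ is the full powerset (all $c_k$ then being equal) — and this last refinement is precisely the reinforced conjecture. The main obstacle is therefore the endpoint of the reduction, not the analysis of the power sums: deriving $2c_{\max} \ge m$ directly from the fact that $\F$ is closed under unions is Frankl's conjecture itself, and the argument above shows the existence of $r$ is no weaker (it is in fact equivalent to the reinforced form). Any genuine proof must at this point exploit union-closure to force some column sum up to $m/2$, and it is there, rather than in the elementary manipulation $I(r)=\sum_k c_k^{\,r}/m^r$, that the difficulty is concentrated.
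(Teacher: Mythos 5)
Your argument is correct and is essentially the paper's own: the double-counting identity $\sum_{\A \in \F^r}\left|\bigcap \A\right| = \sum_{k} c_k^{\,r}$ is exactly the computation in the proof of Proposition \ref{prop:conjeq3}, and your ratio $\rho(r)$ is precisely the paper's $\Sigma[F w_r(F)]$, whose convergence to $c_{\max}$ is the limit argument in the proof of Proposition \ref{prop:conjeq2}. Like the paper, you do not (and cannot) prove the statement outright, since it is an open conjecture; you reduce it to reinforced Frankl's conjecture by the same route, even reproducing the paper's slight looseness in the converse direction (the proof of Proposition \ref{prop:conjeq2} derives only plain Conjecture \ref{conj:matuni} from Conjecture \ref{conj:powers}, just as your borderline case $2c_{\max}=m$ yields only that all $c_k$ are equal, not that $\F$ is the powerset).
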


\begin{proof}(of Proposition \ref{prop:conjeq3}).
Conjecture \ref{conj:powers} states, equivalently, that there is $r \in N$ such that
$$2\frac {\sum_{i=1}^n(\Sigma[F e_i])^{r+1}}{\sum_{i=1}^n(\Sigma[F e_i])^r} \ge m.$$
Translating this inequality in terms of the union-closed family we have
$$2\frac {\sum_{x \in S}|\F_x|^{r+1}}{\sum_{x \in S}|\F_x|^r} \ge |\F|,$$
which can be written as
$$2\frac{\sum_{x \in S}|\F_x|^{r+1}}{|\F|^{r+1}} \ge \frac{\sum_{x \in S}|\F_x|^r}{|\F|^r}.$$
The thesis follows from the equalities
$$\sum_{x \in S}|\F_x|^r = \sum_{x \in S}\left|\left\{\A : \A \in \F^r , \ x \in \bigcap \A \right\}\right|=$$
$$\sum_{\A \in \F^r}\left|\left\{x : x \in S , \ x \in \bigcap \A \right\}\right| = \sum_{\A \in \F^r}\left|\bigcap \A\right|.$$
\end{proof}

\section{A stronger conjecture}\label{stronger}

If $r=0$  in Conjecture \ref{conj:inters}, then the inequality becomes
$$2\frac{\sum_{A \in \F}|A|}{|\F|} \ge |S|,$$
which means that the average cardinality of members of $\F$ is at least half of the cardinality of $S$.
This is not true in general, a counterexample is the family $\F = \{\varnothing,\{a\},\{b\},\{a,b\},\{a,b,c\}\}$ on the set $S=\{a,b,c\}$.

If $r=1$ in Conjecture \ref{conj:inters}, then the inequality becomes
$$2\frac{\sum_{(A,B) \in \F^2}|A \cap B|}{|\F|^2} \ge \frac{\sum_{A \in \F}|A|}{|\F|}.$$
We have not found counterexamples to this inequality, which is clearly stronger than Frankl's conjecture.
We do not have counterexamples to the analogous inequalities for $r>1$ either,
but from now on we restrict our attention to the case $r=1$, which admits a particularly simple formulation.
Of course, this case would become of little interest regarding Frankl's conjecture, once a counterexample be found.
We refer to the inequality for $r=1$ and equivalent ones as \emph{strong union-closed sets conjecture}:

\begin{conj} \label{conj:inttwo}
Let $\F$ be a reduced union-closed family on a set $S$.
Then, the average cardinality of members of $\F$ is not greater than twice the average cardinality of intersections of ordered pairs of members of $\F$.
\end{conj}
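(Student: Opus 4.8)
The plan is to convert the statement into a single inequality on the column sums of the characteristic matrix and then to extract that inequality from the union-closed structure. Passing to the characteristic matrix $F$ of $\F$, write $m=|\F|$ and, for $x=x_k\in S$, set $c_x=|\F_x|=\Sigma[Fe_k]$ for the number of members through $x$. Exactly as in the proof of Proposition \ref{prop:conjeq3}, $\sum_{A\in\F}|A|=\sum_{x\in S}c_x$ and $\sum_{(A,B)\in\F^2}|A\cap B|=\sum_{x\in S}c_x^2$, since an ordered pair contributes to the intersection count precisely through the elements lying in both sets. Hence Conjecture \ref{conj:inttwo} is equivalent to
$$2\sum_{x\in S}c_x^2\;\ge\;m\sum_{x\in S}c_x,$$
and, normalising $p_x=c_x/m\in[0,1]$ to the fraction of members through $x$, to $\sum_{x\in S}p_x(2p_x-1)\ge 0$. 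Reading $p_x=\Pr[x\in A]$ for a uniformly random $A\in\F$ and taking $A,B$ independent and uniform, this is $2\,\mathbb{E}[|A\cap B|]\ge\mathbb{E}[|A|]$, equivalently $\mathbb{E}[|A\triangle B|]\le\mathbb{E}[|A|]$ or $\mathbb{E}[|A\cap B|]\ge\mathbb{E}[|A\setminus B|]$. I would keep all three forms at hand, since each suggests a different attack: a second-moment estimate, a bound on the average symmetric difference, or an injection comparing intersections with set differences.

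The second step is to bring the union-closed hypothesis to bear, and to do so in a way that is specific to the exponent $r=1$. This is forced on us: the $r=0$ companion already fails for union-closed families (the very example $\{\varnothing,\{a\},\{b\},\{a,b\},\{a,b,c\}\}$ is reduced and union-closed), so no argument based on crude averaging or on union-closure alone can succeed, and the passage to the quadratic quantity $\sum_x c_x^2$ must be used essentially. My main line would be an induction on $|S|$ by element removal. Fixing $x$, both $\F_x$ and $\F\setminus\F_x$ are union-closed, the latter a reduced union-closed family on $S\setminus\{x\}$ and the former an up-set for the union operation; for $y\ne x$ the count $c_y$ splits as its contribution from members avoiding $x$ plus its contribution from members through $x$, and union-closure (through maps of the type $A\mapsto A\cup\{x\}$) ties these two parts together. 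The aim is to write $\sum_x p_x(2p_x-1)$ as the sum of the analogous expressions for the two subfamilies plus an error term controlled by $c_x$, and to verify that the error has the right sign.

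The main obstacle, which I expect to be decisive, is the contribution of the \emph{rare} elements, those with $p_x<1/2$: the inequality demands that the abundant elements outweigh them in a weighted second-moment sense, and controlling this is genuinely harder than Frankl's conjecture. Indeed, since $\F$ is reduced we have $c_x\ge 1$ hence $p_x>0$ for every $x$, so if all $p_x<1/2$ then every term $p_x(2p_x-1)$ is strictly negative; thus $\sum_x p_x(2p_x-1)\ge 0$ immediately yields $2\max_x|\F_x|\ge|\F|$, i.e. Conjecture \ref{conj:setuni}. Consequently no unconditional elementary argument can settle Conjecture \ref{conj:inttwo} without settling Frankl's conjecture itself. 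I would therefore aim realistically at one of two weaker goals. The first is an unconditional but quantitatively weakened inequality of the shape $2\sum_x c_x^2\ge\lambda\,m\sum_x c_x$ with the best constant $\lambda<1$ that union-closure alone provides, in the spirit of the related inequality of Section \ref{ineq}. The second is a proof of the full inequality for structured classes of lattices — distributive, lower-semimodular, or of bounded width — where the rare-element contribution can be dominated by an explicit injection built from the join operation, the error term in the induction then being forced nonnegative.
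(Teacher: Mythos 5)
You were asked to prove something that the paper itself does not prove: Conjecture \ref{conj:inttwo} is the paper's \emph{strong union-closed sets conjecture}, presented explicitly as an open statement strictly stronger than Frankl's conjecture. The paper only (i) obtains it as the $r=1$ case of Conjecture \ref{conj:inters}, (ii) rewrites it in the equivalent forms of Conjectures \ref{conj:intdif}, \ref{conj:intuni} and \ref{conj:matdif}, and (iii) proves in Section \ref{ineq} an unconditional but different companion inequality, together with a sufficient condition (Proposition \ref{prop:symdifcor}). So there is no paper proof to compare against, and your refusal to claim a complete proof is the correct stance rather than a shortcoming.

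Within that frame, your analysis is sound and largely retraces the paper's own reductions. The identities $\sum_{A\in\F}|A|=\sum_{x}c_x$ and $\sum_{(A,B)\in\F^2}|A\cap B|=\sum_x c_x^2$ are exactly the equalities used in the proof of Proposition \ref{prop:conjeq3}, and your equivalent forms --- $2\sum_x c_x^2\ge m\sum_x c_x$, $\mathbb{E}[|A\cap B|]\ge\mathbb{E}[|A\setminus B|]$, and the symmetric-difference bound $\mathbb{E}[|A\oplus B|]\le\mathbb{E}[|A|]$ --- are respectively the $r=1$ case of Conjecture \ref{conj:inters}, Conjecture \ref{conj:intdif}, and a statement adjacent to (but weaker in hypothesis than) Proposition \ref{prop:symdif}, whose unconditional bound is $\mathbb{E}[|A\oplus B|]\le|S|/2$ rather than $\mathbb{E}[|A|]$. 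Your key meta-observation is also correct and well calibrated: since a reduced family has $c_x\ge 1$ for every $x\in S$, the inequality $\sum_x p_x(2p_x-1)\ge 0$ forces some $p_x\ge 1/2$, hence Conjecture \ref{conj:inttwo} implies Conjecture \ref{conj:setuni}; this matches the paper's remark that the statement is clearly stronger than Frankl's conjecture, and it rules out any easy unconditional argument. Note that your first fallback (a weakened unconditional second-moment inequality) is essentially what Section \ref{ineq} delivers: Proposition \ref{prop:symdif} gives $\sum_{(A,B)\in\F^2}|A\otimes B|\ge\sum_{(A,B)\in\F^2}|A\oplus B|$ for \emph{every} nonempty family, with exact defect term $\sum_x\delta_x^2$, and Proposition \ref{prop:symdifcor} converts this into a sufficient condition for Conjecture \ref{conj:intdif}. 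The one caution: your sketched induction on $|S|$ (splitting $\F$ into $\F_x$ and $\F\setminus\F_x$) is entirely unexecuted, and by your own implication argument it cannot close unconditionally; it should be presented as a program, not as partial progress toward a proof.
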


The inequality of Conjecture \ref{conj:inttwo} can be written as
$$2\sum_{(A,B) \in \F^2}|A \cap B| \ge \sum_{(A,B) \in \F^2}|A|,$$
$$\sum_{(A,B) \in \F^2}|A \cap B| \ge \sum_{(A,B) \in \F^2}\left(|A| - |A \cap B|\right),$$
$$\sum_{(A,B) \in \F^2}|A \cap B| \ge \sum_{(A,B) \in \F^2}|A \setminus B|.$$
Hence we have the reformulation:

\begin{conj} \label{conj:intdif}
Let $\F$ be a reduced union-closed family on a set $S$.
Then, the average cardinality of differences of ordered pairs of members of $\F$ is not greater than the average cardinality of intersections of ordered pairs of members of $\F$.
\end{conj}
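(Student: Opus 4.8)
The plan is to reduce the statement to a single inequality in the element multiplicities and then attack that inequality through the union-closed structure. Since $|A|-|A\cap B|=|A\setminus B|$ for all $A,B$, Conjecture~\ref{conj:intdif} is a verbatim rewriting of Conjecture~\ref{conj:inttwo}, so it suffices to prove that $\sum_{(A,B)\in\F^2}|A\cap B|\ge\sum_{(A,B)\in\F^2}|A\setminus B|$. First I would count both sides element by element. Writing $m=|\F|$ and $d_x=|\F_x|$ for $x\in S$, there are exactly $d_x^2$ pairs $(A,B)$ with $x\in A\cap B$ and exactly $d_x(m-d_x)$ pairs with $x\in A\setminus B$. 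Summing over $x\in S$, the inequality becomes
$$\sum_{x\in S}d_x^2\ \ge\ \sum_{x\in S}d_x(m-d_x),\qquad\text{that is}\qquad \sum_{x\in S}d_x(2d_x-m)\ \ge\ 0.$$
This is the $r=1$ instance of Conjecture~\ref{conj:inters}, and it is the form I would work with.

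Here each element contributes a positive term when it is \emph{abundant} ($2d_x>m$) and a negative term when it is \emph{rare} ($2d_x<m$), so the task is to show that the surplus carried by the abundant elements dominates the deficit carried by the rare ones. It is worth noting at the outset that no purely convexity-based estimate can succeed: Cauchy--Schwarz gives the valid bound $2\sum_x d_x^2\ge 2(\sum_x d_x)^2/n$, but this suffices for the target only when $2\sum_x d_x\ge mn$, i.e. only under the $r=0$ statement that the average cardinality of a member is at least $|S|/2$ — which is false, as the family $\{\varnothing,\{a\},\{b\},\{a,b\},\{a,b,c\}\}$ already shows. Hence any proof must use the union-closed hypothesis in an essential way, and not merely through moment inequalities.

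The structural input I would exploit is that for every $x\in S$ both $\F_x$ and $\F\setminus\F_x$ are themselves union-closed (a union of two members containing $x$ again contains $x$, and likewise for members avoiding $x$). This suggests an induction on $|S|$: delete an element $x_0$, split $\F$ into the union-closed trace $\{A\in\F:x_0\notin A\}$ on $S\setminus\{x_0\}$ and the union-closed family $\{A\setminus\{x_0\}:A\in\F_{x_0}\}$, and relate the quantity $\Phi(\F)=\sum_x d_x(2d_x-m)$ to the corresponding quantities of these two pieces, using $d_y(\F)=d_y^{(0)}+d_y^{(1)}$ for $y\ne x_0$ and $m=|\F^{(0)}|+|\F^{(1)}|$. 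Equivalently, rewriting the target as $\sum_{A\in\F}\sum_{x\in A}(2d_x-m)\ge 0$, one may try to build the members of $\F$ up from the join-irreducible members, arguing that each successive join contributes nonnegatively on average.

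The main obstacle I anticipate is twofold. First, $\Phi$ is \emph{quadratic} in the multiplicities $d_x$, so deleting an element changes several $d_x$ simultaneously and produces cross-terms $d_y^{(0)}d_y^{(1)}$ that a naive induction cannot control; tracking how the abundant/rare partition migrates under deletion is the delicate point. Second, and more fundamentally, the inequality is strictly stronger than Frankl's conjecture itself: the latter asserts only that \emph{some} abundant element exists, whereas here the abundant elements must collectively outweigh the rare ones. Thus the crux is a genuinely quantitative bound on the total rare deficit $\sum_{2d_x<m}d_x(m-2d_x)$ in terms of the abundant surplus, and extracting such a bound from the lattice structure is precisely the step that current techniques do not reach — which is why the statement is posed here as a conjecture rather than proved.
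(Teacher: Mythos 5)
The first thing to be clear about: the paper does not prove this statement, and does not claim to. Conjecture \ref{conj:intdif} is the paper's \emph{strong union-closed sets conjecture}, posed as an open problem strictly stronger than Frankl's conjecture; the only argument attached to it in the paper is the three-line chain of equivalences showing it is a rewriting of Conjecture \ref{conj:inttwo}, plus a supporting (unconditional) identity in Section \ref{ineq}. Your proposal is in exactly the right position relative to that. Your opening reduction via $|A|-|A\cap B|=|A\setminus B|$ is precisely the paper's derivation; your element-wise count giving the reformulation
$$\sum_{x\in S} d_x\,(2d_x-m)\ \ge\ 0$$
is correct and is indeed the $r=1$ case of Conjecture \ref{conj:inters}; your observation that convexity alone cannot suffice (using the paper's own counterexample to the $r=0$ statement) is sound; and your conclusion that the missing step is a genuinely quantitative bound on the rare elements' deficit — which current techniques do not provide — is exactly why the statement is a conjecture rather than a proposition. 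So there is no gap in your reasoning, but also no proof; that matches the paper.

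One connection worth making explicit: your surplus/deficit quantity is the paper's $\delta_x$ in disguise, since $\delta_x = |\{A\in\F : x\in A\}| - |\{A\in\F : x\notin A\}| = 2d_x - m$, so your target reads $\sum_{x\in S} d_x\delta_x \ge 0$, equivalently
$$\sum_{x\in S}\delta_x^2\ +\ m\sum_{x\in S}\delta_x\ \ge\ 0.$$
The identity proved in Section \ref{ineq}, namely $\sum_{(A,B)\in\F^2}|A\otimes B| = \sum_{(A,B)\in\F^2}|A\oplus B| + \sum_{x\in S}\delta_x^2$, gives a combinatorial meaning to the square term $\sum_x\delta_x^2$ for an arbitrary family, but says nothing about the signed term $m\sum_x\delta_x$, which is negative exactly when the average member has fewer than $|S|/2$ elements (as in the counterexample you cite); bounding that term using the union-closed hypothesis is the open problem, and it is where both your sketch and the paper stop.
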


Adding the inequality of Conjecture \ref{conj:intdif} with the analog one with the role of $A$ and $B$ switched, and the equality of the sum of intersections with itself, we obtain
$$3\sum_{(A,B) \in \F^2}|A \cap B| \ge \sum_{(A,B) \in \F^2}|A \cup B|.$$
Hence we have the reformulation:

\begin{conj} \label{conj:intuni}
Let $\F$ be a reduced union-closed family on a set $S$.
Then, the average cardinality of unions of ordered pairs of members of $\F$ is not greater than three times the average cardinality of intersections of ordered pairs of members of $\F$.
\end{conj}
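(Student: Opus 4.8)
The plan is to prove that Conjecture \ref{conj:intuni} is equivalent to Conjecture \ref{conj:intdif}, and hence to the strong union-closed sets conjecture in all its forms. The displayed derivation preceding the statement already supplies one implication, so I need only make both directions rigorous and add the converse. The whole argument rests on two elementary facts: the disjoint decomposition $|A \cup B| = |A \setminus B| + |B \setminus A| + |A \cap B|$, valid for any two finite sets, and the symmetry of the double sum over ordered pairs, namely
$$\sum_{(A,B) \in \F^2} |A \setminus B| = \sum_{(A,B) \in \F^2} |B \setminus A|,$$
which follows by relabelling each ordered pair $(A,B)$ as $(B,A)$, a bijection of $\F^2$ onto itself.

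For the forward implication I would argue exactly as in the text. Assuming Conjecture \ref{conj:intdif} in the form $\sum_{(A,B) \in \F^2} |A \cap B| \ge \sum_{(A,B) \in \F^2} |A \setminus B|$, the symmetry above gives the companion inequality $\sum_{(A,B) \in \F^2} |A \cap B| \ge \sum_{(A,B) \in \F^2} |B \setminus A|$. Adding these two inequalities to the trivial identity $\sum_{(A,B) \in \F^2} |A \cap B| = \sum_{(A,B) \in \F^2} |A \cap B|$, and then applying the decomposition of the union termwise, yields $3 \sum_{(A,B) \in \F^2} |A \cap B| \ge \sum_{(A,B) \in \F^2} |A \cup B|$, which upon division by $|\F|^2$ is precisely the inequality of Conjecture \ref{conj:intuni}.

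For the converse I would run the same identities backwards. Using the decomposition together with the symmetry, the sum over unions collapses to
$$\sum_{(A,B) \in \F^2} |A \cup B| = 2 \sum_{(A,B) \in \F^2} |A \setminus B| + \sum_{(A,B) \in \F^2} |A \cap B|.$$
Substituting this into the inequality of Conjecture \ref{conj:intuni} and cancelling the common term $\sum_{(A,B) \in \F^2} |A \cap B|$ from both sides leaves $2 \sum_{(A,B) \in \F^2} |A \cap B| \ge 2 \sum_{(A,B) \in \F^2} |A \setminus B|$, equivalently Conjecture \ref{conj:intdif}. This closes the equivalence.

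I do not expect a genuine obstacle here: every step is an exact set-theoretic identity or a cancellation, and no inequality is ever weakened. The only points requiring care are that the symmetry of the ordered-pair sum be invoked cleanly, since it is precisely what recombines the three pieces of the union, and that division by $|\F|^2$ is harmless because $\F$ is nonempty by hypothesis. The substantive content remains entirely inside Conjecture \ref{conj:intdif}; the present statement merely repackages it in the more symmetric language of unions against intersections.
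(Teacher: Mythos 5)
Your proposal is correct, and its forward half is exactly the paper's derivation: the text obtains Conjecture \ref{conj:intuni} by adding the inequality of Conjecture \ref{conj:intdif}, the companion inequality with the roles of $A$ and $B$ switched (your symmetry of the ordered-pair sum), and the trivial equality of the intersection sum with itself. Where you go beyond the text is the converse: the paper merely calls Conjecture \ref{conj:intuni} a ``reformulation'' and never spells out why the implication reverses, whereas you make this explicit by expanding $\sum_{(A,B)\in\F^2}|A\cup B| = 2\sum_{(A,B)\in\F^2}|A\setminus B| + \sum_{(A,B)\in\F^2}|A\cap B|$ via the disjoint decomposition of the union, substituting into the inequality of Conjecture \ref{conj:intuni}, and cancelling the common intersection term. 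This converse uses only the same two identities run backwards, so it is not a different method; but it is precisely the step that justifies the paper's implicit claim of equivalence, and your write-up is therefore the more complete of the two.
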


We translate the strong union-closed conjecture in terms of union-closed matrices.
Let $\F = \{ A_1, \ldots , A_m \}$ be a nonempty family of subsets of a nonempty set $S = \{ x_1, \ldots , x_n \}$, and $F$ its characteristic matrix. We write $F^T$ for the transpose of $F$ and $\overline{F}$ for the characteristic matrix of the family of complements of members of $\F$,
that is $\overline{F}(h,k)=1-F(h,k)$. The following equalities are straightforward:
$$[FF^T](h,k) = |A_h \cap A_k|,$$
$$[F\overline{F}^T](h,k) = |A_h \setminus A_k|,$$
$$[\overline{F}F^T](h,k) = |A_k \setminus A_h|,$$
$$[\overline{F}\overline{F}^T](h,k) = |A_h^C \cap A_k^C|.$$
Then, Conjecture \ref{conj:intdif} becomes:

\begin{conj} \label{conj:matdif}
Let $F$ be a reduced union-closed matrix.
Then, it holds
$$\Sigma[FF^T] \ge \Sigma[F\overline{F}^T],$$
$$\Sigma[FF^T] \ge \Sigma[\overline{F}F^T].$$
\end{conj}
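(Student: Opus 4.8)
The plan is to collapse the two displayed inequalities into one, reduce everything to the column sums, and then isolate the genuinely combinatorial core. \emph{First} I would observe that the two inequalities coincide: since the sums range over all \emph{ordered} pairs, the relabelling $(h,k)\mapsto(k,h)$ gives $\Sigma[F\overline{F}^T]=\sum_{h,k}|A_h\setminus A_k|=\sum_{h,k}|A_k\setminus A_h|=\Sigma[\overline{F}F^T]$, so it suffices to prove $\Sigma[FF^T]\ge\Sigma[F\overline{F}^T]$. \emph{Next} I would evaluate both sides column by column. Writing $a_k=\Sigma[Fe_k]=|\F_{x_k}|$, the element $x_k$ contributes to $[FF^T]$ for exactly the $a_k^2$ ordered pairs $(A,B)$ with $x_k\in A\cap B$, and to $[F\overline{F}^T]$ for exactly the $a_k(m-a_k)$ ordered pairs with $x_k\in A\setminus B$. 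Hence $\Sigma[FF^T]=\sum_k a_k^2$ and $\Sigma[F\overline{F}^T]=m\sum_k a_k-\sum_k a_k^2$, so the claim is equivalent to the scalar inequality $2\sum_k a_k^2\ge m\sum_k a_k$, that is, to the $r=1$ instance of Conjecture \ref{conj:inters}.

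This reformulation shows where the obstruction lies. For an arbitrary family the inequality is false, and Cauchy--Schwarz only gives $\sum_k a_k^2\ge\frac1n(\sum_k a_k)^2$, which merely reduces the claim to the $r=0$ bound already refuted above; so union-closedness must enter essentially. The route I would take is a join double count: from $|A\cap B|=|A|+|B|-|A\cup B|$ one gets $\Sigma[FF^T]=2m\sum_{A\in\F}|A|-\sum_{(A,B)\in\F^2}|A\cup B|$, and since $\F$ is union-closed each $A\cup B$ is again a member, whence $\sum_{(A,B)}|A\cup B|=\sum_{C\in\F}N(C)\,|C|$ with $N(C)=|\{(A,B)\in\F^2:A\cup B=C\}|$ and $\sum_C N(C)=m^2$. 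The target then becomes the size-weighted bound $\sum_C N(C)\,|C|\le\frac32 m\sum_{A}|A|$, which is equivalent to Conjecture \ref{conj:intuni}.

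The main obstacle is exactly this last bound: it requires controlling how often a \emph{large} member $C$ is produced as a join, and it is here that the statement becomes strictly stronger than Frankl's conjecture, so I do not expect a one-shot charging or injection argument to settle it. I would attempt it by induction on the lattice associated with $\F$, removing a coatom and tracking the change in $m$, in the $a_k$, and in the representation numbers $N(C)$; alternatively one may try to charge each join $A\cup B$ to the members lying below it, using that $A\cup B$ dominates both $A$ and $B$. Realistically a complete proof is beyond current techniques, and the honest deliverable is the reduction above together with a related partial inequality of the kind established in Section \ref{ineq}.
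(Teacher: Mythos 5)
Your proposal is correct as far as it goes, and it mirrors the paper's own treatment: the statement is a conjecture (the matrix translation of Conjecture \ref{conj:intdif}, equivalently the $r=1$ case of Conjecture \ref{conj:inters}), and the paper, exactly like you, only establishes the chain of equivalences via the identities $[FF^T](h,k)=|A_h\cap A_k|$, $[F\overline{F}^T](h,k)=|A_h\setminus A_k|$, $[\overline{F}F^T](h,k)=|A_k\setminus A_h|$, without proving the inequality itself. Your individual reductions --- the symmetry $\Sigma[F\overline{F}^T]=\Sigma[\overline{F}F^T]$ over ordered pairs, the column-sum form $2\sum_k a_k^2\ge m\sum_k a_k$, and the equivalence with Conjecture \ref{conj:intuni} --- are all correct and agree with the paper's, so your honest conclusion that the remaining core is an open strengthening of Frankl's conjecture is the right one.
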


\section{A useful inequality}\label{ineq}

In support of Conjecture \ref{conj:intdif}, we prove an inequality which holds for any nonempty family $\F$ of subsets of a nonempty set $S$.
For every $A,B \in \F$, let $A \oplus B$ be the symmetric difference, corresponding to the Boolean $\textsf{XOR}$ operation and $A \otimes B$ be the complement of the symmetric difference, corresponding to the Boolean $\textsf{XNOR}$ operation. Basic equalities are:\\
$A \oplus B = (A \cup B) \cap (A^C \cup B^C)$, $A \oplus B = (A \setminus B) \cup (B \setminus A)$,\\
$A \otimes B = (A \cap B) \cup (A^C \cap B^C)$, $A \otimes B = (A \to B) \cap (B \to A)$,\\
$A \otimes B = (A \oplus B)^C$.

\begin{prop} \label{prop:symdif}
Let $\F$ be a nonempty family of subsets of a nonempty set $S$.
Then, the average cardinality of symmetric differences of ordered pairs of members of $\F$ is not greater than the average cardinality of complements of symmetric differences of ordered pairs of members of $\F$. In formulas,
$$\sum_{(A,B) \in \F^2}|A \otimes B| \ge \sum_{(A,B) \in \F^2}|A \oplus B|.$$
The inequality is strict unless every $x \in S$ belongs to exactly half of the members of $\F$.
\end{prop}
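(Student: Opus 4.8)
The plan is to prove the inequality by decomposing both sides element by element and recognizing the resulting difference as a sum of squares. First I would use the fact that the cardinality of a set is the sum over $x \in S$ of the indicator that $x$ lies in the set. For a fixed ordered pair $(A,B)$, the element $x$ contributes $1$ to $|A \oplus B|$ exactly when $x$ belongs to precisely one of $A,B$, and contributes $1$ to $|A \otimes B|$ exactly when $x$ belongs to both or to neither (since $A \otimes B = (A \oplus B)^C$). This lets me interchange the order of summation and write each side as a sum over $x \in S$ of a count of ordered pairs.

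Next I would fix $x \in S$ and set $m = |\F|$ and $d_x = |\F_x|$, so that $m - d_x$ members of $\F$ avoid $x$. A direct count of ordered pairs shows that the number of pairs $(A,B)$ in which $x$ lies in exactly one of $A,B$ is $2 d_x (m - d_x)$, while the number in which $x$ lies in both or in neither is $d_x^2 + (m - d_x)^2$. Hence the difference of the two sides becomes
$$\sum_{(A,B)\in\F^2}|A\otimes B| - \sum_{(A,B)\in\F^2}|A\oplus B| = \sum_{x\in S}\bigl[\,d_x^2+(m-d_x)^2-2d_x(m-d_x)\,\bigr].$$

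The key algebraic step is then to recognize the bracketed term as a perfect square, namely $d_x^2+(m-d_x)^2-2d_x(m-d_x) = (m - 2d_x)^2 \ge 0$, so that summing over $x$ yields the claimed inequality immediately. For the equality condition, the total vanishes precisely when each summand $(m - 2d_x)^2$ is zero, i.e. when $d_x = m/2$ for every $x \in S$, which is exactly the statement that every element belongs to half the members of $\F$. I do not expect a genuine obstacle here: the only point requiring care is the ordered-pair bookkeeping for a fixed $x$, after which the statement reduces to the nonnegativity of a sum of squares (essentially a variance estimate on the membership counts $d_x$).
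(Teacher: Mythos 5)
Your proof is correct, and in fact it establishes more than the stated inequality: your difference term $\sum_{x \in S}(m-2d_x)^2$ is exactly the paper's correction term $\sum_{x \in S}\delta_x^2$, since $\delta_x = |\F_x|-|\F_x^C| = 2d_x - m$, so you have re-derived the precise identity that the paper proves as its quantitative version of the proposition. The underlying computation is the same, but the packaging is genuinely different: the paper works with the characteristic matrix $F$ and its complement $\overline{F}$, expands $(F-\overline{F})(F-\overline{F})^T = FF^T - F\overline{F}^T - \overline{F}F^T + \overline{F}\,\overline{F}^T$, and uses the identity $\Sigma[CC^T]=\sum_{k}(\Sigma[Ce_k])^2$ to turn the error term into a sum of squared column sums, whereas you do the equivalent bookkeeping elementarily, exchanging the order of summation and counting, for each fixed $x$, the ordered pairs in each membership class ($2d_x(m-d_x)$ for the symmetric difference versus $d_x^2+(m-d_x)^2$ for its complement), then completing the square. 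Your version buys self-containedness and transparency: no matrix formalism is needed, the equality case is read off term by term, and the variance interpretation of the membership counts is immediate. The paper's version buys alignment with the rest of its framework: the intermediate quantities $\Sigma[FF^T]$, $\Sigma[F\overline{F}^T]$, $\Sigma[\overline{F}F^T]$ are precisely the objects appearing in the matrix form of the strong conjecture (Conjecture \ref{conj:matdif}), so the identity emerges already in the form in which it is exploited in Proposition \ref{prop:symdifcor}.
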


Proposition \ref{prop:symdif} follows from the next proposition. for every $x \in S$, let
$\delta_x = |\F_x|-|\F_x^C| = |\{A \in \F : x \in A \}| - |\{A \in \F : x \notin A \}|$.

\begin{prop} \label{prop:symdif}
Let $\F$ be a nonempty family of subsets of a nonempty set $S$.
Then,
$$\sum_{(A,B) \in \F^2}|A \otimes B| = \sum_{(A,B) \in \F^2}|A \oplus B| + \sum_{x \in S}\delta_x^2.$$
\end{prop}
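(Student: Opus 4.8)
The plan is to prove the identity by interchanging the order of summation and isolating the contribution of each element $x \in S$. The key observation is that both $|A \otimes B|$ and $|A \oplus B|$, once written as sums of indicators over $S$, depend on a given element $x$ only through whether $A$ and $B$ agree at $x$; consequently each side of the claimed identity collapses to a sum over $x$ of a quantity determined entirely by $|\F_x|$.

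First I would expand each cardinality as a sum of Iverson indicators over $S$, writing $|A \otimes B| = \sum_{x \in S} [x \in A \otimes B]$ and $|A \oplus B| = \sum_{x \in S} [x \in A \oplus B]$; here $x$ lies in $A \otimes B$ exactly when $A$ and $B$ agree at $x$ (both contain $x$, or neither does), and $x$ lies in $A \oplus B$ exactly when they disagree. Interchanging the order of the two finite summations then gives
$$\sum_{(A,B) \in \F^2}|A \otimes B| = \sum_{x \in S} \sum_{(A,B) \in \F^2} [x \in A \otimes B],$$
together with the analogous identity in which $\otimes$ is replaced by $\oplus$.

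Next I would fix $x$ and set $p = |\F_x|$ and $q = |\F_x^C| = |\F| - p$, so that $p + q = |\F|$ and $\delta_x = p - q$ by definition. Counting the $|\F|^2$ ordered pairs according to their behaviour at $x$, exactly $p^2 + q^2$ of them agree at $x$ (both members in $\F_x$, or both in $\F_x^C$) and exactly $2pq$ disagree (one member in each). Hence the inner sum over pairs contributes $p^2 + q^2$ to the $\otimes$ side and $2pq$ to the $\oplus$ side.

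Finally, subtracting the two resulting sums term by term in $x$ and invoking the elementary identity $p^2 + q^2 - 2pq = (p-q)^2 = \delta_x^2$ yields
$$\sum_{(A,B) \in \F^2}|A \otimes B| - \sum_{(A,B) \in \F^2}|A \oplus B| = \sum_{x \in S}(p-q)^2 = \sum_{x \in S}\delta_x^2,$$
which is precisely the asserted equality. There is no serious obstacle in this argument; the only point deserving care is the combinatorial count of agreeing versus disagreeing ordered pairs, giving $p^2 + q^2$ and $2pq$, since everything else is a routine interchange of finite sums followed by the square-completion identity.
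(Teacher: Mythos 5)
Your proof is correct, but it takes a genuinely different route from the paper's. You argue element by element: expand both cardinalities as sums of indicator values over $S$, interchange the two finite summations, and for each fixed $x$ count the ordered pairs that agree or disagree at $x$, obtaining $p^2+q^2$ and $2pq$ with $p=|\F_x|$, $q=|\F|-p$, so that the difference is $(p-q)^2=\delta_x^2$; this count is right, and the argument is complete. The paper instead works with the characteristic matrix $F$ and its complement $\overline{F}$: it expands $(F-\overline{F})(F-\overline{F})^T$ to obtain $\Sigma[FF^T]+\Sigma[\overline{F}\overline{F}^T]=\Sigma[F\overline{F}^T]+\Sigma[\overline{F}F^T]+\Sigma[(F-\overline{F})(F-\overline{F})^T]$, then uses the identity $\Sigma[CC^T]=\sum_{k=1}^n(\Sigma[Ce_k])^2$ to recognize the last term as $\sum_{x \in S}\delta_x^2$, and finally translates the matrix sums back into $|A\cap B|+|A^C\cap B^C|$ and $|A\setminus B|+|B\setminus A|$. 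The two arguments rest on the same underlying square-of-a-difference identity (your $p^2+q^2-2pq=(p-q)^2$ is exactly the diagonal content of the paper's quadratic form), but yours is more elementary and self-contained, needing no matrix machinery, while the paper's version has the advantage of landing directly in the notation $\Sigma[FF^T]$, $\Sigma[F\overline{F}^T]$, $\Sigma[\overline{F}F^T]$ of Conjecture \ref{conj:matdif}, which is the form in which the inequality is meant to be applied in Section \ref{ineq}.
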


\begin{proof}
Let $A,B \in \Q^{m \times n}$ be two matrices. It holds
$$(A-B)(A-B)^T = AA^T - AB^T - BA^T + BB^T,$$
hence
$$\Sigma[AA^T]+\Sigma[BB^T]=\Sigma[AB^T]+\Sigma[BA^T]+\Sigma[(A-B)(A-B)^T].$$
For a matrix $C \in \Q^{m \times n}$, the number $\Sigma CC^T$ is the scalar product of the vector, sum of all the rows of the matrix, with itself, indeed
$$\Sigma CC^T = \sum_{i=1}^m \sum_{j=1}^m e_i^T C C^T e_j = \left[ \sum_{h=1}^m e_h^T C \right] \left[ \sum_{h=1}^m e_h^T C \right]^T,$$
hence it is
$$\Sigma CC^T = \sum_{k=1}^n (\Sigma[C e_k])^2.$$
Then, for $F$ characteristic matrix of $\F$, putting together the previous equalities with $A=F$, $B=\overline{F}$ and $C=A-B$
we have
$$\Sigma[FF^T]+\Sigma[\overline{F}\overline{F}^T]=\Sigma[F\overline{F}^T]+\Sigma[\overline{F}F^T]+\sum_{k=1}^n \left(\Sigma[(F-\overline{F}) e_k]\right)^2.$$
This equality, translated in terms of the family of sets, becomes
$$\sum_{(A,B) \in \F^2}(|A \cap B| +|A^C \cap B^C|) = \sum_{(A,B) \in \F^2}(|A \setminus B| + |B \setminus A| )+ \sum_{x \in S}\delta_x^2,$$
whence the thesis follows.
\end{proof}

From the last equality it follows directly: 
\begin{prop} \label{prop:symdifcor}
Let $\F$ be a reduced union-closed family on a set $S$.
If $$\sum_{(A,B) \in \F^2}|A \cap B| + \sum_{x \in S}\delta_x^2 \ge \sum_{(A,B) \in \F^2}|A^C \cap B^C|,$$
then Conjecture \ref{conj:intdif} is true.
\end{prop}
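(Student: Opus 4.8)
The plan is to read both Conjecture \ref{conj:intdif} and the hypothesis of the proposition through the single identity already established in Proposition \ref{prop:symdif}, so that the whole statement collapses to a one-line rearrangement. First I would record the symmetry of the two difference sums: since summing over all \emph{ordered} pairs $(A,B) \in \F^2$ is invariant under interchanging the two entries, we have
$$\sum_{(A,B) \in \F^2}|A \setminus B| = \sum_{(A,B) \in \F^2}|B \setminus A|.$$
Writing $D$ for this common value, $I$ for $\sum_{(A,B) \in \F^2}|A \cap B|$, $J$ for $\sum_{(A,B) \in \F^2}|A^C \cap B^C|$, and $\Delta$ for $\sum_{x \in S}\delta_x^2$, the set-theoretic identity obtained in Proposition \ref{prop:symdif} reads
$$I + J = 2D + \Delta.$$

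Next I would rewrite the target. As noted in the text preceding Conjecture \ref{conj:intdif}, that conjecture asserts precisely $I \ge D$. Eliminating $D$ by means of $2D = I + J - \Delta$, the inequality $I \ge D$ is equivalent to $2I \ge I + J - \Delta$, that is, to $I + \Delta \ge J$. This last inequality is exactly the hypothesis of the proposition, so the conclusion follows at once.

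I do not expect a genuine obstacle, since the entire argument rests on the identity of Proposition \ref{prop:symdif}; the only point deserving a moment's care is the symmetry that identifies the two difference sums, which is what lets the four-term identity be written with a single $D$ and thus makes the substitution clean. It is worth remarking that the argument in fact shows the hypothesis and Conjecture \ref{conj:intdif} to be \emph{equivalent}, given the identity, not merely that one implies the other. The proposition is therefore best viewed as a restatement that trades the asymmetric intersection/difference comparison $I \ge D$ for the more symmetric comparison of $A \cap B$ against $A^C \cap B^C$, corrected by the nonnegative defect term $\sum_{x \in S}\delta_x^2$.
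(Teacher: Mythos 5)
Your proof is correct and is essentially the paper's own argument: the paper's entire proof is the remark that the proposition ``follows directly'' from the identity
$\sum_{(A,B)\in\F^2}\bigl(|A\cap B|+|A^C\cap B^C|\bigr)=\sum_{(A,B)\in\F^2}\bigl(|A\setminus B|+|B\setminus A|\bigr)+\sum_{x\in S}\delta_x^2$
established in the proof of Proposition \ref{prop:symdif}, and your rearrangement (using the ordered-pair symmetry to write the difference sums as a single quantity $D$, then eliminating $D$) is exactly that deduction made explicit. Your closing observation that, given the identity, the hypothesis is in fact \emph{equivalent} to Conjecture \ref{conj:intdif} is also correct, though the paper leaves it unstated.
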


\end{document}